\documentclass[twoside]{aiml18}

% Please include these macros
\usepackage{aiml18macro}

% Here you can include the standard packages you use.
% Try to avoid using non-standard packages.
% If you use a non-standard package you will have
% to submit it when you submit the final version of
% your paper.
\usepackage{graphicx}
\usepackage{amsmath}
\usepackage{amssymb}
\usepackage{tikz}
\usepackage{stmaryrd}
\usepackage{hyperref}

%%%%%%%%%%%%%%%%%%%%%%%%%%%%%%%%%%%%%%%%%%%%%%%%%%%%%%%%%
% Setting the correct page numbers                      
% Ignore the next two commented lines                   
% but please don't delete                               
%%%%%%%%%%%%%%%%%%%%%%%%%%%%%%%%%%%%%%%%%%%%%%%%%%%%%%%%%
%\input{../procnum.tex}
%\numbering{../aiml18db}{paper}

% definitions specific to your article

%\newcommand{\ra}{\rightarrow}
\newcommand{\KTB}{\mathbf{KTB}}
\newcommand{\vKTB}{\mathcal{B}}

\newcommand{\deq}{\mathrel{\mathop:}=}
\newcommand{\pw}{\raise0.545ex\hbox{{\scalebox{1.0925}[1.0925]{\(\wp\)}}}}

%%%%%%%%%%%%%%%%%%%%%%%%%%%%%%%%%%%%%%%%%%%%%%%%%%%%%%%%%

%The following line defines the page header consisting of the surnames of the authors.
% Please include only the last names! 
% Separate by commas except the last two surnames which are separated by an "and".

\begin{document}

\begin{frontmatter}
  \title{Normal extensions of \(\mathbf{KTB}\) of codimension 3}
  \author{James Koussas}
  \address{Department of Mathematics and Statistics \\ La Trobe University \\
    Melbourne, Australia}
  \author{Tomasz Kowalski}
  \address{Department of Mathematics and Statistics \\ La Trobe University \\
    Melbourne, Australia}  
  \author{Yutaka Miyazaki}
  \address{Osaka University of Economics and Law \\ Osaka, Japan}
  \author{Michael Stevens}
  \address{Research School of Information Sciences and Engineering \\ Australian
    National University \\ Canberra, Australia}

\begin{abstract}
It is known that in the lattice of normal extensions of the
logic \(\mathbf{KTB}\) there are unique logics of codimensions
\(1\) and \(2\), namely, the logic of a single reflexive point, and
the logic of the total relation on two points. A natural question arises about
the cardinality of the set of normal extensions of \(\mathbf{KTB}\) of
codimension \(3\). Generalising two finite examples
found by a computer search, we construct an uncountable family of (countable)
graphs, and prove that certain frames based on these produce 
a continuum of normal extensions of \(\mathbf{KTB}\) of codimension \(3\). We use
algebraic methods, which in this case turn out to be better suited to the task
than frame-theoretic ones. 
\end{abstract}

\begin{keyword}
  Normal extensions, KTB-algebras, Subvarieties
\end{keyword}
\end{frontmatter}

\section{Introduction}

The Kripke semantics of \(\KTB\) is the class of reflexive and symmetric frames,
that is, frames  whose accessibility relation is a \emph{tolerance}. Since
irreflexivity is not modally definable, it can be argued that \(\KTB\) is
\emph{the} logic of simple graphs. Yet \(\KTB\) is much less investigated that
its transitive cousins, and in fact certain tools working very well for
transitive logics (for example, canonical formulas) have no \(\KTB\) counterparts
working nearly as well. Among the articles dealing specifically
with \(\KTB\) and its extensions, Kripke incompleteness in various guises was investigated
in~\cite{Miya07c} and~\cite{Kos08}, interpolation in~\cite{Kos12} and~\cite{Kos16b},
normal forms in~\cite{Miya07a}, and splittings in~\cite{Miya07b}, \cite{KM09}
and~\cite{Kos16a}. In the present article we focus on the upper part of the
lattice of normal (axiomatic) extensions of \(\KTB\), or viewed dually,
the lower part of the lattice of subvarieties of the corresponding 
variety of modal algebras. 

The article is centred around a single construction, so it is structured rather
simply: in the present section we give necessary preliminaries, in Section~2 we
outline the history of the problem, in Section~3 we present the main
construction and in Section~4 we draw 
the conclusion that there are uncountably many extensions of \(\KTB\) of
codimension 3.  

Although we will use algebraic methods, we wish to 
move rather freely between graphs, frames and algebras. To make these
transitions smooth we now establish a few conventions, the general principle
behind them being that italic capitals stand for graphs, blackboard bold
capitals for Kripke frames, and boldface capitals for algebras.
With every simple graph \(G =\langle V; E\rangle \), finite or infinite, we
associate a Kripke frame 
\(\mathbb{G}\) with the same universe and the reflexive closure of \(E\) as the
accessibility relation.  For example, \(\mathbb{K}_i\) will be a
looped version of \(K_i\), the complete graph on \(i\) vertices. Thus,
\(\mathbb{K}_1\) is a single reflexive point, and \(\mathbb{K}_2\) a two-element
cluster. We will refer to these frames simply as graphs, unless
the context calls for disambiguation.
For a graph \(\mathbb{G}\), we will write \(\mathsf{Cm}(\mathbb{G})\), to
denote its complex algebra. The figure below illustrates our conventions.

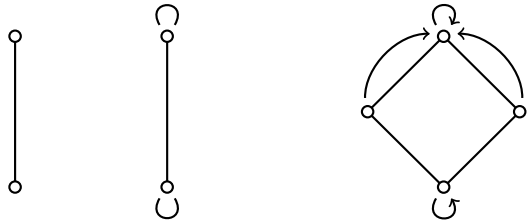
\begin{figure}[h]
\centering
\begin{tikzpicture}
\draw[thick] (-2,0) -- (-2,2);
\draw[fill=white, thick] (-2,2) circle (0.075);
\draw[fill=white, thick] (-2,0) circle (0.075);
\draw[thick] (0,0) -- (0,2);
\draw[fill=white, thick] (0,2) circle (0.075);
\draw[fill=white, thick] (0,0) circle (0.075);
\draw[thick] (-0.1,2.15) .. controls (-0.3,2.5) and (0.3,2.5) .. (0.1,2.15);
\draw[thick] (-0.1,-0.15) .. controls (-0.3,-0.5) and (0.3,-0.5) .. (0.1,-0.15);
\end{tikzpicture} \hspace{2cm}
\begin{tikzpicture}
\draw[thick] (1,0) -- (0,1) -- (-1,0) -- (0,-1) -- cycle;
\draw[fill=white, thick] (0,1) circle (0.075);
\draw[fill=white, thick] (0,-1) circle (0.075);
\draw[fill=white, thick] (1,0) circle (0.075);
\draw[fill=white, thick] (-1,0) circle (0.075);
\draw[thick, -to] (-0.1,1.15) .. controls (-0.3,1.5) and (0.3,1.5) .. (0.1,1.15);
\draw[thick, -to] (-0.1,-1.15) .. controls (-0.3,-1.5) and (0.3,-1.5) .. (0.1,-1.15);
\draw[thick,-to] (10:1.05) .. controls (30:1.2) and (60:1.2) .. (80:1.05);
\draw[thick,-to] (170:1.05) .. controls (150:1.2) and (120:1.2) .. (100:1.05);
\end{tikzpicture}
\caption{Diagrams of \(K_2\), \(\mathbb{K}_2\) and \(\mathsf{Cm}(\mathbb{K}_2)\).}
\label{k2}
\end{figure}

 If \(\mathbb{G}\) is infinite,
\(\mathsf{Cm}(\mathbb{G})\) will 
typically be too big for our purposes, but certain special subalgebras of
\(\mathsf{Cm}(\mathbb{G})\) will play a critical 
role. These algebras are mathematically the same as general (descriptive) frames over
\(\mathbb{G}\), so the machinery of bounded morphisms reduces
in these cases to verifying whether the identity map is one. The identity
map is of course frame-theoretically invisible, so all that remains is algebra.
This is essentially why algebraic methods are better suited to the
task.

We assume familiarity with the basics of universal algebra and model theory.
To be more precise, ultraproducts and {\L}o\'s Theorem, J\'onsson's Lemma for
congruence-distributive varieties, and some consequences of the congruence extension
property will suffice. All of these concepts are covered in \cite{CBUA} and \cite{BS81}.  
Our algebraic notation is standard: we use upright \(\mathop{\textup{I}}\),
\(\mathop{\textup{H}}\), \(\mathop{\textup{S}}\), \(\mathop{\textup{P}}\),
and \(\mathop{\textup{P}_\textup{U}} \) for the usual class operators of
taking isomorphic copies, homomorphic images, subalgebras, direct products and
ultraproducts, respectively. We also write \(\mathop{\textup{Si}}(\mathcal{C})\)
for the class of subdirectly irreducible algebras in \(\mathcal{C}\). 
The variety generated by a class of algebras \(\mathcal{C}\) we denote by
\(\mathop{\textup{Var}}(\mathcal{C})\), so 
\(\mathop{\textup{Var}}\) is a shorthand for \(\mathop{\textup{HSP}}\).
When we deal with Boolean algebras of sets, we use the standard set theoretical
\(\cup\) and \(\cap\), and we write \({\sim}X\) instead of \(\neg X\) for the complement of
\(X\).

\subsection{\(\KTB\)-algebras}

A \emph{\(\KTB\)-algebra} is an algebraic structure 
\(\mathbf{A}=\langle A;\vee,\wedge,\neg,\Diamond,0,1\rangle\) such that
\(\langle A;\vee,\wedge,\neg,0,1\rangle\) is a Boolean algebra,
and \(\Diamond\) a unary operation satisfying the following conditions:
\begin{enumerate}
\item \(\Diamond 0=0\),
\item \(\Diamond(x\vee y) = \Diamond x\vee  \Diamond y\),
\item \(x\leqslant  \Diamond x\),
\item \(x\leqslant \Box\Diamond x\),  
\end{enumerate}
where \(\Box\), as usual, stands for \(\neg\Diamond\neg\)
The last two conditions can be rendered as identities and so the class of
\(\KTB\)-algebras is a variety, which we will denote by \(\vKTB\).
The inequality (iv) is also equivalent to: 
\[
x\wedge \Diamond y = 0 \iff \Diamond x\wedge y = 0. 
\]
Therefore, \(\Diamond\) is a \emph{self-conjugate operator} in the sense of
\cite{JT51}, \cite{JT52} and so \(\vKTB\) is a variety of self-conjugate Boolean
Algebras with  Operators (BAOs). Incidentally, the equational axiomatisation
above is equivalent to the quasiequational one below:
\begin{enumerate}
\item[(1)] \(x\leqslant y \Longrightarrow \Diamond x\leqslant \Diamond y\),
\item[(2)] \(x\leqslant  \Diamond x\),
\item[(3)] \(x\leqslant \Box\Diamond x\). 
\end{enumerate}

For completeness, we include the following well known
propositions (see \cite{JT51}, \cite{TTML}, \cite{CBUA} and \cite{BS81} for proofs and useful exercises). The first two deal with \(\KTB\)-algebras, and the third one
recalls some crucial facts from universal algebra. 

\begin{proposition}\label{B-gen} 
For any graph \(G = \langle V; E\rangle \), the algebra \(\mathsf{Cm}(\mathbb{G})\) is a
\(\KTB\)-algebra. The class of all such algebras generates the variety \(\vKTB\). 
\end{proposition}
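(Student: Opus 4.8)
The plan is to prove the two assertions separately. For the first, the work is a routine verification. Writing $R$ for the reflexive closure of $E$ and putting, for $X \subseteq V$,
\(\Diamond X \deq \{\, v \in V : \exists w\,(vRw \text{ and } w \in X)\,\}\)
on the Boolean algebra $\pw(V)$, conditions (i) and (ii) are immediate from the shape of this definition, since an existential quantifier over a fixed relation annihilates $\emptyset$ and distributes over unions. Condition (iii), $X \subseteq \Diamond X$, is just reflexivity of $R$, which holds by construction. For (iv) one unravels $\Box\Diamond X = \{\, v : \forall w\,(vRw \Rightarrow w \in \Diamond X)\,\}$: if $v \in X$ and $vRw$, then symmetry of $R$ gives $wRv$, so $w \in \Diamond X$, and hence $v \in \Box\Diamond X$. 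Thus $\mathsf{Cm}(\mathbb{G})$ satisfies (i)--(iv) and is a $\KTB$-algebra.

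For the second assertion, one inclusion is free: by the first part every $\mathsf{Cm}(\mathbb{G})$ lies in $\vKTB$, so the variety they generate is contained in $\vKTB$. For the converse I would invoke the J\'onsson--Tarski representation. Given a $\KTB$-algebra $\mathbf{A}$, form its canonical frame $\mathbb{A}_+ = \langle \mathrm{Uf}(\mathbf{A}); R\rangle$, where $\mathrm{Uf}(\mathbf{A})$ is the set of ultrafilters of the Boolean reduct and $uRw$ iff $a \in w$ implies $\Diamond a \in u$ for all $a$ (equivalently, $\Box a \in u$ implies $a \in w$). The Stone map $a \mapsto \{\, u : a \in u \,\}$ is a BAO-embedding of $\mathbf{A}$ into $\mathsf{Cm}(\mathbb{A}_+)$, so it only remains to check that $R$ is reflexive and symmetric; once that is done, $\mathsf{Cm}(\mathbb{A}_+)$ is a $\KTB$-algebra by the first part and $\mathbf{A}$ has been realised as a subalgebra of it. Reflexivity: (iii) yields $\Box a \leqslant a$, so $\Box a \in u$ forces $a \in u$, i.e.\ $uRu$. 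Symmetry: if $uRw$ and $b \in u$, then (iv) gives $\Box\Diamond b \in u$, whence $\Diamond b \in w$ by the definition of $R$; as $b$ was arbitrary, $wRu$. (This is the concrete shadow of $\Diamond$ being a self-conjugate operator: the canonical relations attached to a conjugate pair are mutual converses, so a self-conjugate operator yields a symmetric relation.)

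To finish, observe that every reflexive symmetric frame $\langle W; R\rangle$ is of the form $\mathbb{G}$: take the simple graph $G = \langle W;\, R \setminus \{(v,v) : v \in W\}\rangle$, whose edge relation is symmetric and irreflexive and whose reflexive closure is $R$ again. Hence $\mathbf{A} \in \mathop{\textup{I}}\mathop{\textup{S}}(\mathsf{Cm}(\mathbb{G}))$ for this $G$, so $\vKTB \subseteq \mathop{\textup{Var}}(\{\mathsf{Cm}(\mathbb{G}) : G \text{ a simple graph}\})$, and equality follows. The only step carrying genuine content is the J\'onsson--Tarski embedding together with the two canonicity checks; everything else is bookkeeping, and since all of this is standard (and the proposition is flagged as well known) a published proof could simply point to \cite{JT51} and \cite{TTML} at that point.
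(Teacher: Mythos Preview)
Your proof is correct and is the standard argument. The paper does not give its own proof of this proposition: it is introduced as one of several ``well known propositions'' with a pointer to \cite{JT51}, \cite{TTML}, \cite{CBUA} and \cite{BS81} for proofs, precisely as you anticipated in your closing remark.
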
  

\begin{proposition}\label{CEP-and-CD}
The variety \(\mathcal{B}\) is congruence distributive and has the congruence
extension property.
\end{proposition}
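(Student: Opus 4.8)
The plan is to establish the two assertions separately, in each case reducing to a standard fact about Boolean algebras with operators specialised to the normal unary case; neither part is deep, so the emphasis is on getting the bookkeeping right.

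For congruence distributivity, I would use that every \(\KTB\)-algebra has a Boolean --- hence distributive lattice --- reduct. Concretely, the ternary term \(m(x,y,z)\deq(x\wedge y)\vee(y\wedge z)\vee(z\wedge x)\) satisfies \(m(x,x,y)=m(x,y,x)=m(y,x,x)=x\) by the lattice identities alone, so \(m\) is a majority term for \(\mathcal B\); by J\'onsson's theorem a variety possessing a near-unanimity term is congruence distributive. (Equivalently, one notes that \(\mathrm{Con}(\mathbf A)\) is a sublattice of the congruence lattice of the Boolean reduct of \(\mathbf A\), and distributivity is inherited by sublattices.)

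For the congruence extension property I would invoke the correspondence, valid in any variety of normal modal algebras, between congruences of a \(\KTB\)-algebra and its \emph{open filters}, i.e.\ Boolean filters \(F\) closed under \(\Box\): the map \(F\mapsto\theta_F\), where \(a\mathrel{\theta_F}b\) iff \((a\leftrightarrow b)\in F\), is a bijection onto \(\mathrm{Con}\), with inverse \(\theta\mapsto 1/\theta\). Now let \(\mathbf A\leqslant\mathbf B\) in \(\mathcal B\) and \(\theta\in\mathrm{Con}(\mathbf A)\); put \(F=1/\theta\) and let \(G\) be the Boolean filter of \(\mathbf B\) generated by \(F\). As \(F\) is already closed under finite meets, \(G=\{\,b\in B: a\leqslant b\text{ for some }a\in F\,\}\). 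Two things need checking. First, \(G\) is open: if \(a\leqslant b\) with \(a\in F\), then \(\Box a\leqslant\Box b\) --- here I use that \(\Box\) is monotone in every \(\KTB\)-algebra, which follows from monotonicity of \(\Diamond\) (axiom (1) of the quasiequational presentation) together with \(\Box=\neg\Diamond\neg\) --- and \(\Box a\in F\) since \(F\) is open, so \(\Box b\in G\). Second, \(G\cap A=F\): the inclusion \(\supseteq\) is trivial, and if \(b\in A\) with \(a\leqslant b\) for some \(a\in F\), then \(b\in F\) because \(F\) is upward closed in \(A\). Hence \(G\) determines a congruence \(\psi=\theta_G\) of \(\mathbf B\), and for all \(a_1,a_2\in A\),
\[
a_1\mathrel{\psi}a_2 \iff (a_1\leftrightarrow a_2)\in G \iff (a_1\leftrightarrow a_2)\in G\cap A=F \iff a_1\mathrel{\theta}a_2 ,
\]
using in the second step that \(a_1\leftrightarrow a_2\in A\). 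Thus \(\psi\) restricts to \(\theta\) on \(\mathbf A\), which is exactly the congruence extension property.

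The one step that is not purely mechanical is the verification that the Boolean filter generated by an open filter is again open, and there everything hinges on monotonicity of \(\Box\); in a full write-up the remaining care goes into recalling the congruence/open-filter correspondence precisely enough that the argument is genuinely self-contained rather than cited wholesale.
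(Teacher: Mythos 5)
Your proof is correct. Note that the paper does not prove this proposition at all: it is stated as ``well known'' with pointers to \cite{JT51}, \cite{TTML}, \cite{CBUA} and \cite{BS81}, so there is no in-paper argument to compare against. What you have written is essentially the standard textbook argument that those references contain: congruence distributivity from the majority term \(m(x,y,z)=(x\wedge y)\vee(y\wedge z)\vee(z\wedge x)\) supplied by the Boolean reduct (equivalently, from the fact that the congruences of the expansion form a sublattice of the distributive congruence lattice of the reduct), and the congruence extension property via the correspondence between congruences and open filters, extending \(F=1/\theta\) to its upward closure in the larger algebra and checking openness from monotonicity of \(\Box\). All the details you verify (openness of the generated filter, \(G\cap A=F\), and the restriction computation) are the right ones and are carried out correctly. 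One small remark worth making explicit: your argument nowhere uses the reflexivity and symmetry axioms (iii)--(iv), so it in fact establishes both properties for the variety of all normal modal algebras, of which \(\mathcal{B}\) is a subvariety --- and both properties are inherited by subvarieties, which is another way to phrase the reduction to the cited general facts.
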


\begin{proposition}\label{ua-basics}
Let \(\mathcal{V}\) be a variety of algebras, and \(\mathcal{C}\) a subclass of
\(\mathcal{V}\). 
\begin{enumerate}
\item
If \(\mathcal{V}\) has the congruence extension property, \(\mathbf{A} \) is a simple algebra in \(\mathcal{V}\) and \(\mathbf{B} \in \mathop{\textup{IS}}(\mathbf{A})\), then \(\mathbf{B}\) is simple.
\item
 If \(\mathcal{V}\) has the congruence extension property, then
\(\mathop{\textup{HS}}(\mathcal{C}) = \mathop{\textup{SH}}(\mathcal{C})\). 
\item 
If \(\mathcal{V}\) is congruence distributive, then 
  \(\mathop{\textup{Si}}(\mathop{\textup{Var}}(\mathcal{C}))
  = \mathop{\textup{Si}}(\mathop{\textup{HSP}_\textup{U}}(\mathcal{C}))\).
\item
We have \(\mathcal{V} = \mathop{\textup{Var}}(\mathop{\textup{Si}}(\mathcal{V}))\).
\end{enumerate}  
\end{proposition}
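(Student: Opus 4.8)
The plan is to dispatch the four items in turn, each reducing to a short argument once the appropriate standard tool is brought in. For item~1, suppose \(\mathbf{B}\) is (isomorphic to) a subalgebra of a simple algebra \(\mathbf{A} \in \mathcal{V}\), and let \(\theta\) be a congruence of \(\mathbf{B}\) other than the identity. By the congruence extension property there is a congruence \(\theta'\) of \(\mathbf{A}\) whose restriction to \(B\) is \(\theta\), that is, \(\theta' \cap B^2 = \theta\). Since \(\theta\) is non-trivial, so is \(\theta'\), whence \(\theta' = A^2\) by simplicity of \(\mathbf{A}\), and therefore \(\theta = A^2 \cap B^2 = B^2\). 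As \(\mathbf{B}\) is non-trivial (being a subalgebra of a non-trivial algebra whose constants \(0,1\) are distinct), it has exactly two congruences and so is simple.

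For item~2 I would prove the two inclusions separately. The inclusion \(\mathop{\textup{SH}}(\mathcal{C}) \subseteq \mathop{\textup{HS}}(\mathcal{C})\) needs no hypothesis: if \(\mathbf{D}\) embeds into a quotient \(\mathbf{C}/\theta\) with \(\mathbf{C} \in \mathcal{C}\), then, identifying \(\mathbf{D}\) with its copy inside \(\mathbf{C}/\theta\), the preimage \(\pi^{-1}(D)\) under the quotient map \(\pi \colon \mathbf{C} \to \mathbf{C}/\theta\) is a subalgebra of \(\mathbf{C}\) that \(\pi\) maps onto \(\mathbf{D}\). For the reverse inclusion one uses the congruence extension property exactly as in item~1: given \(\mathbf{D} \cong \mathbf{B}/\theta\) with \(\mathbf{B} \leqslant \mathbf{C} \in \mathcal{C}\), extend \(\theta\) to a congruence \(\theta'\) of \(\mathbf{C}\) with \(\theta' \cap B^2 = \theta\); then \(b/\theta \mapsto b/\theta'\) is a well-defined embedding of \(\mathbf{B}/\theta\) into \(\mathbf{C}/\theta' \in \mathop{\textup{H}}(\mathcal{C})\), so \(\mathbf{D} \in \mathop{\textup{SH}}(\mathcal{C})\).

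For item~3 I would again split into two inclusions. The inclusion ``\(\supseteq\)'' is immediate, since \(\mathop{\textup{Var}}(\mathcal{C})\), being a variety, is closed under \(\mathop{\textup{H}}\), \(\mathop{\textup{S}}\), \(\mathop{\textup{P}}\) and hence under \(\mathop{\textup{P}_\textup{U}}\), so \(\mathop{\textup{HSP}_\textup{U}}(\mathcal{C}) \subseteq \mathop{\textup{Var}}(\mathcal{C})\) and a subdirectly irreducible member of the smaller class is one of the larger. The inclusion ``\(\subseteq\)'' is exactly J\'onsson's Lemma for congruence-distributive varieties: every subdirectly irreducible member of \(\mathop{\textup{Var}}(\mathcal{C})\) already lies in \(\mathop{\textup{HSP}_\textup{U}}(\mathcal{C})\). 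This is the one genuinely substantial ingredient --- it is proved by the ultrafilter/filtered-product argument that crucially uses congruence distributivity --- but since the excerpt permits us to invoke it, nothing further is required here.

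Finally, item~4 is Birkhoff's subdirect representation theorem: every \(\mathbf{A} \in \mathcal{V}\) is isomorphic to a subdirect product of subdirectly irreducible quotients of itself, so \(\mathcal{V} \subseteq \mathop{\textup{ISP}}(\mathop{\textup{Si}}(\mathcal{V})) \subseteq \mathop{\textup{Var}}(\mathop{\textup{Si}}(\mathcal{V}))\), and the reverse inclusion holds because \(\mathop{\textup{Si}}(\mathcal{V}) \subseteq \mathcal{V}\) and \(\mathcal{V}\) is a variety. Apart from these appeals to J\'onsson's Lemma and Birkhoff's theorem, the only step calling for any care is the congruence bookkeeping --- extending \(\theta\) to \(\theta'\) and checking \(\theta' \cap B^2 = \theta\) --- in items~1 and~2; I do not anticipate a real obstacle anywhere.
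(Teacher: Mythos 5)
Your proposal is correct: the paper states this proposition without proof, referring the reader to the cited textbooks, and your arguments (the CEP congruence-extension bookkeeping for items~1 and~2, J\'onsson's Lemma for item~3, and Birkhoff's subdirect representation theorem for item~4) are exactly the standard ones those references contain. The only point worth flagging is that in item~1, for an arbitrary variety, one must rule out \(\mathbf{B}\) being trivial, and your justification via the distinct constants \(0,1\) is specific to the signature of \(\KTB\)-algebras rather than to a general \(\mathcal{V}\) --- but since that is precisely the setting in which the proposition is applied, nothing breaks.
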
  

As usual, we define the term operations \(\Diamond^n\), one for each \(n\),
recursively, putting 
\(\Diamond^0x = x\) and \(\Diamond^{n+1}x = \Diamond\Diamond^nx\).

\begin{definition}\label{nclo}
Let \(\textup{\textbf{B}} = \langle B; \vee, \wedge, \neg, \Diamond,0,1 \rangle
\in \mathcal{B}\). Then the map \(\gamma\colon B\to B\) given by \(\gamma(x) =
\Box\Diamond x\) is a closure operator on \(\mathbf{B}\), which we call the
\emph{natural closure   operator} on \(\textup{\textbf{B}}\). 
\end{definition}

The following properties of natural closure operators will be useful.

\begin{lemma}\label{natclo}
Let \(\textup{\textbf{B}} = \langle B; \vee, \wedge , \neg, \Diamond, 0 ,1
\rangle \in \mathcal{B}\) and let \(\gamma\) denote the natural closure operator
on \(\textup{\textbf{B}}\). 
\begin{enumerate}
\item[\textup{(i)}]
If \(x\in B\) is \(\gamma\)-closed, then \(\neg x = \Diamond\neg\Diamond x\) and
\(\Diamond\neg x = \Diamond^2\neg\Diamond x\). 
\item[\textup{(ii)}]
If \(x\in B\), then \(\Diamond\gamma(x) = \Diamond x\).
\end{enumerate}
\end{lemma}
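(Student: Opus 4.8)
The plan is to reduce both parts to the defining axioms (i)--(iv) of a \(\KTB\)-algebra by simply unwinding the definition of the natural closure operator, \(\gamma(x) = \Box\Diamond x = \neg\Diamond\neg\Diamond x\). The only external ingredient needed is the order-dual of axiom (iv), which I would record first: since \(x\leqslant \Box\Diamond x\) holds for every \(x\), substituting \(\neg x\) and taking complements gives \(\Diamond\Box x\leqslant x\) for every \(x\); in particular \(\Diamond\Box\Diamond x\leqslant \Diamond x\). I would also note that \(\Diamond\) is monotone, which is immediate from additivity (axiom (ii)).

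For part (i), the hypothesis that \(x\) is \(\gamma\)-closed means precisely \(\gamma(x)=x\), i.e.\ \(\neg\Diamond\neg\Diamond x = x\). Taking complements of both sides yields \(\Diamond\neg\Diamond x = \neg x\) directly, which is the first claimed identity. Applying \(\Diamond\) to both sides of this last equation then gives \(\Diamond\Diamond\neg\Diamond x = \Diamond\neg x\), that is \(\Diamond^2\neg\Diamond x = \Diamond\neg x\), which is the second claimed identity. So part (i) is a two-line computation using nothing beyond the definition of \(\gamma\)-closedness.

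For part (ii), I would prove the two inequalities separately. For \(\Diamond x\leqslant \Diamond\gamma(x)\): axiom (iv) says \(x\leqslant \Box\Diamond x = \gamma(x)\), so monotonicity of \(\Diamond\) gives \(\Diamond x\leqslant \Diamond\gamma(x)\). For the reverse inequality, \(\Diamond\gamma(x) = \Diamond\Box\Diamond x\leqslant \Diamond x\) by the dual form of axiom (iv) recorded above. Combining the two gives \(\Diamond\gamma(x)=\Diamond x\).

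There is no real obstacle here: the lemma is essentially bookkeeping with the modal axioms. The one thing to be careful about is to have the dual \(\mathbf{B}\)-axiom \(\Diamond\Box y\leqslant y\) available before starting, and to keep straight which of the two inclusions in (ii) rests on \(x\leqslant\Box\Diamond x\) and which on its dual.
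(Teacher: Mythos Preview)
Your proof is correct and matches the paper's argument essentially verbatim: part~(i) is handled identically, and for part~(ii) the paper obtains the inequality \(\Diamond\gamma(x)\leqslant\Diamond x\) by instantiating axiom~(iv) at \(\neg\Diamond x\) and unwinding, which is exactly your derivation of the dual law \(\Diamond\Box y\leqslant y\) done in-line rather than stated upfront.
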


\begin{proof}
Let \(x\in B\). If \(x\) is \(\gamma\)-closed, then \(x = \Box\Diamond x\),
thus \(\neg x = \Diamond\neg\Diamond x\) and so
\(\Diamond\neg x = \Diamond^2\neg\Diamond x\), hence (i) holds.  

As \(\gamma\) is a closure operator, we have  \(x \leqslant \gamma(x)\), hence
\(\Diamond x \leqslant \Diamond \gamma(x)\). Similarly, \(\neg\Diamond x
\leqslant \gamma(\neg\Diamond x) = \Box\Diamond\neg\Diamond x =
\neg\Diamond\Box\Diamond x = \neg\Diamond\gamma(x)\),
so \(\Diamond \gamma(x)\leqslant \Diamond x\). Thus,
\(\Diamond \gamma(x) = \Diamond x\), hence (ii) holds.
\end{proof} 

\begin{lemma}\label{nicegen}
Let \(\textup{\textbf{B}} = \langle B; \vee, \wedge, \neg, \Diamond , 0 ,1\rangle\in
\mathcal{B}\) and  let \(\gamma\) be the natural closure operator of \(\textup{\textbf{B}}\). If \(\textup{\textbf{B}} \models \exists x \colon x \neq 0 \mathrel{\&} \Diamond x \neq 1\) and
\(\textup{\textbf{B}} \models \forall x \colon x \neq 0 \to \Diamond^n x = 1\), for some \(n\in \omega\setminus \{0\}\), then there is a \(\gamma\)-closed \(y \in B\) with
\(\Diamond y \neq 1\) and \(\Diamond^2 y =1 \). 
\end{lemma}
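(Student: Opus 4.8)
The plan is to find the required $y$ inside the $\Diamond$-orbit of the given witness and then close it up using $\gamma$ without disturbing the values that matter. First I would fix an element $x \in B$ with $x \neq 0$ and $\Diamond x \neq 1$, which exists by the first hypothesis. Since $x \leqslant \Diamond x$ and $\Diamond$ is monotone (condition (1) of the quasiequational presentation), the sequence $x, \Diamond x, \Diamond^2 x, \dots$ is non-decreasing; in particular, once some $\Diamond^j x$ equals \(1\), so do all later terms. By the second hypothesis together with \(x \neq 0\) we have \(\Diamond^n x = 1\), so \(\{\,j \in \omega : \Diamond^j x = 1\,\}\) is non-empty, and I let \(m\) be its least element. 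From \(\Diamond x \neq 1\) we get \(x \neq 1\) (as \(x \leqslant \Diamond x\)), hence \(\Diamond^0 x \neq 1\) and \(\Diamond^1 x \neq 1\), so \(m \geqslant 2\).

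Now I would put \(z \deq \Diamond^{m-2}x\) and \(y \deq \gamma(z) = \Box\Diamond z\). Since \(\gamma\) is a closure operator, \(\gamma(y) = \gamma(\gamma(z)) = \gamma(z) = y\), so \(y\) is \(\gamma\)-closed. By Lemma~\ref{natclo}(ii), \(\Diamond y = \Diamond\gamma(z) = \Diamond z = \Diamond^{m-1}x\), which is not \(1\) by the minimality of \(m\). Applying \(\Diamond\) once more and using \(\Diamond y = \Diamond z\) again gives \(\Diamond^2 y = \Diamond\Diamond z = \Diamond^2 z = \Diamond^{m}x = 1\). Hence \(y\) is \(\gamma\)-closed with \(\Diamond y \neq 1\) and \(\Diamond^2 y = 1\), as required.

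There is essentially no obstacle beyond bookkeeping. The one point needing care is that \(m\) is well defined, which is exactly where monotonicity of the \(\Diamond\)-orbit (i.e.\ \(x \leqslant \Diamond x\)) is used; and the step that makes the argument go through cleanly is Lemma~\ref{natclo}(ii), which ensures that passing from \(z\) to its \(\gamma\)-closure alters neither \(\Diamond z\) nor \(\Diamond^2 z\), so that the closed element \(y\) inherits the two inequalities witnessed by \(z\).
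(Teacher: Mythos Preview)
Your argument is correct and is essentially the paper's own proof, differing only in indexing: you take \(m\) to be the least index with \(\Diamond^m x = 1\) and set \(y = \gamma(\Diamond^{m-2}x)\), whereas the paper picks \(m\) with \(\Diamond^m x \neq 1\), \(\Diamond^{m+1}x = 1\) and sets \(y = \gamma(\Diamond^{m-1}x)\). Both hinge on Lemma~\ref{natclo}(ii) in exactly the same way.
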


\begin{proof}
Let \(x\) be a witness of \(\exists x \colon x \neq 0 \mathrel{\&} \Diamond x
\neq 1\) in \(\mathbf{B}\). By assumption, \(\textbf{B} \models \forall x \colon
x \neq 0 \to \Diamond^n x = 1\), so we must have \(\Diamond^n x = 1\). Hence,
there is some \(m \in\{1,\dots,n-1\}\) with \(\Diamond^{m} x \neq 1\) and
\(\Diamond^{m+1} x=1\).  By Lemma
\ref{natclo}(ii), \(\Diamond \gamma(\Diamond^{m-1} x) = \Diamond^m  x \neq 1\)
and \(\Diamond^{m+1} x = 1\). Since \(\gamma(\Diamond^{m-1}x)\)
is \(\gamma\)-closed, putting \(y = \gamma(\Diamond^{m-1}(x))\), we get a
\(\gamma\)-closed \(y \in B\) with \(\Diamond y \neq 1\) and \(\Diamond^2 y =1
\), as required. 
\end{proof}

\section{The history of the problem}

A logic \(L\) is said to have codimension \(n\), in some lattice \(\Lambda\) of
logics, if there exists a descending chain 
\(L_0\succ\dots \succ L_n\) of logics from \(\Lambda\), such that \(L_0\) is
inconsistent, \(L_n = L\), and \(L_{i-1}\) covers \(L_i\) for each
\( i\in\{0,\dots,n\}\).  Lattices of nonclassical logics are typically very
complicated, so looking at logics of small codimensions is one way of analysing
these lattices. In particular, finding the smallest \(n\) for which
there are uncountably many logics of codimension \(n\) in \(\Lambda\) indicates
at which level the lattice gets really badly complicated.

Let \(\mathrm{NExt}(\KTB)\) stand for the lattice of normal extensions
of \(\mathbf{KTB}\), where we identify logics with their sets of theorems.
We intend to show that for \(\Lambda = \mathrm{NExt}(\KTB)\) the smallest such \(n\) is \(3\).  

\begin{remark}\label{quasiv}
If we identified logics with their \emph{consequence operations}, rather than
their sets of theorems, \(\mathrm{NExt}(\KTB)\) would be the 
the lattice of normal \emph{axiomatic} extensions of
\(\mathbf{KTB}\). Let us call the lattice of all normal extensions of \(\mathbf{KTB}\),
whether axiomatic or not, \(\mathrm{CNExt}(\KTB)\). Then  
\(\mathrm{NExt}(\KTB)\) is a subposet of \(\mathrm{CNExt}(\KTB)\).
However, the codimension of a logic \(L\in\mathrm{NExt}(\KTB)\) can 
be smaller in \(\mathrm{NExt}(\KTB)\) than the codimension of \(L\) in
\(\mathrm{CNExt}(\KTB)\). It follows from results of
Blanco, Campercholi and Vaggione (see Theorem~1 in~\cite{BCV01}) that
for any logic \(L\in\mathrm{NExt}(\KTB)\) of codimension at least 2,
\(\mathrm{NExt}(L)\) is \emph{strictly} contained in \(\mathrm{CNExt}(L)\).
\end{remark}

Let \(\mathrm{Subv}(\vKTB)\) stand for the lattice of subvarieties of
\(\mathcal{B}\). Then, 
the usual dual isomorphism between \(\mathrm{NExt}(\KTB)\) and
\(\mathrm{Subv}(\vKTB)\) holds, and therefore logics of
codimension \(n\)  in 
\(\mathrm{NExt}(\KTB)\) correspond to varieties of height \(n\) in
\(\mathrm{Subv}(\vKTB)\). The next theorem gives a complete picture of
\(\mathrm{Subv}(\vKTB)\) up to height 2, and therefore, dually, of
\(\mathrm{NExt}(\KTB)\) down to codimension 2. The second statement in the theorem
is due to the third author (see~\cite{Miya07b}). 
 
\begin{thm}\label{known}
The lattice \(\mathrm{Subv}(\vKTB)\) has exactly one atom, namely
\(\mathop{\textup{Var}}(\mathsf{Cm}(\mathbb{K}_1))\). This atom in turn has
exactly one cover, namely \(\mathop{\textup{Var}}(\mathsf{Cm}(\mathbb{K}_2))\).  
\end{thm}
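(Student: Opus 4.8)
The plan is to argue entirely on the algebraic side, using Proposition~\ref{ua-basics} to reduce everything to subdirectly irreducible (indeed simple) algebras. For the first statement, I would start by showing that $\mathsf{Cm}(\mathbb{K}_1)$, the two-element $\KTB$-algebra, sits inside \emph{every} nontrivial variety in $\mathrm{Subv}(\vKTB)$: any nontrivial $\mathbf{B}\in\mathcal{B}$ has a proper ultrafilter, hence a homomorphism onto the two-element Boolean algebra, and since $\Diamond$ is forced by $0\leqslant x\leqslant\Diamond x$ to be the identity on $\{0,1\}$ (well, $\Diamond 0=0$ and $\Diamond 1=1$), this is a $\KTB$-algebra homomorphism. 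So $\mathsf{Cm}(\mathbb{K}_1)\in\mathop{\textup{HS}}(\mathbf{B})\subseteq\mathop{\textup{Var}}(\mathbf{B})$, which shows $\mathop{\textup{Var}}(\mathsf{Cm}(\mathbb{K}_1))$ is contained in every nontrivial subvariety, so it is the unique atom.

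For the cover statement, let $\mathcal{V}$ be any subvariety strictly above $\mathop{\textup{Var}}(\mathsf{Cm}(\mathbb{K}_1))$; I must show $\mathop{\textup{Var}}(\mathsf{Cm}(\mathbb{K}_2))\subseteq\mathcal{V}$, since the reverse containment ($\mathop{\textup{Var}}(\mathsf{Cm}(\mathbb{K}_2))$ lies strictly above the atom and hence is a candidate cover) is easy — $\mathsf{Cm}(\mathbb{K}_2)$ has four elements and is clearly not in the atom. By Proposition~\ref{ua-basics}(4), $\mathcal{V}$ contains a subdirectly irreducible algebra $\mathbf{B}$ that is not in the atom variety, i.e.\ $\mathbf{B}$ is not two-element, so $|\mathbf{B}|\geqslant 4$. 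The key structural fact I would extract is that in such a $\mathbf{B}$ one can find an element witnessing the frame $\mathbb{K}_2$; concretely, I expect to show that any subdirectly irreducible $\KTB$-algebra other than $\mathsf{Cm}(\mathbb{K}_1)$ has $\mathsf{Cm}(\mathbb{K}_2)\in\mathop{\textup{HS}}(\mathbf{B})$. One clean route: a finite such $\mathbf{B}$ is $\mathsf{Cm}(\mathbb{F})$ for a finite rooted $\KTB$-frame $\mathbb{F}$ with at least two points, and any such frame has a subframe (generated subframe, a pair of adjacent points) giving a bounded morphic image or a subalgebra collapsing onto $\mathsf{Cm}(\mathbb{K}_2)$; for infinite $\mathbf{B}$ one passes through $\mathop{\textup{HSP}_\textup{U}}$ via Proposition~\ref{ua-basics}(3) to extract a finite subdirectly irreducible algebra of size $\geqslant 4$ in $\mathcal{V}$, and then applies the finite case. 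This shows $\mathop{\textup{Var}}(\mathsf{Cm}(\mathbb{K}_2))\subseteq\mathcal{V}$.

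It remains to see there is nothing strictly between $\mathop{\textup{Var}}(\mathsf{Cm}(\mathbb{K}_1))$ and $\mathop{\textup{Var}}(\mathsf{Cm}(\mathbb{K}_2))$: a variety $\mathcal{V}$ with $\mathop{\textup{Var}}(\mathsf{Cm}(\mathbb{K}_1))\subsetneq\mathcal{V}\subseteq\mathop{\textup{Var}}(\mathsf{Cm}(\mathbb{K}_2))$ must, by the cover argument just given, satisfy $\mathop{\textup{Var}}(\mathsf{Cm}(\mathbb{K}_2))\subseteq\mathcal{V}$, so $\mathcal{V}=\mathop{\textup{Var}}(\mathsf{Cm}(\mathbb{K}_2))$. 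Hence $\mathop{\textup{Var}}(\mathsf{Cm}(\mathbb{K}_2))$ covers the atom, and it is the \emph{only} cover precisely because every nontrivial proper extension of the atom contains it.

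The main obstacle I anticipate is the structural claim that every subdirectly irreducible $\KTB$-algebra that is not the two-element one ``contains'' $\mathsf{Cm}(\mathbb{K}_2)$ in $\mathop{\textup{HS}}$. Getting this right requires a careful choice of element: in an s.i.\ $\KTB$-algebra with monolith, one wants an atom-like element $a\neq 0,1$ with $\Diamond a\neq 1$ (to separate it from a neighbour), and one must handle the case where $\Diamond$ is ``too connected'', e.g.\ $\Diamond a=1$ for every nonzero $a<1$ — here I would invoke Lemma~\ref{nicegen} (or a direct argument) to still locate a $\gamma$-closed $y$ with $\Diamond y\neq 1$, $\Diamond^2 y=1$, and then check that $\{0,y,\Diamond y\wedge\neg y,\dots\}$ — after suitable closure — carries a quotient isomorphic to $\mathsf{Cm}(\mathbb{K}_2)$. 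The finite-frame picture makes this transparent, so the real work is the reduction of the infinite case to the finite one via Jónsson's Lemma and ultraproducts, which Proposition~\ref{ua-basics}(3) is tailored to supply.
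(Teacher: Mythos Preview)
The paper does not prove Theorem~\ref{known}; it is stated as a known result, with the second (and harder) statement attributed to Miyazaki~\cite{Miya07b}. So there is no in-paper proof to compare against, and I can only comment on your sketch on its own merits.

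For the atom, your argument has a slip: an arbitrary Boolean homomorphism $h\colon\mathbf{B}\to\mathbf{2}$ need not preserve $\Diamond$ (from $h(a)=0$ nothing forces $h(\Diamond a)=0$). The fix is immediate and simpler than what you wrote: $\{0,1\}$ is a \emph{subalgebra} of every nontrivial $\mathbf{B}\in\mathcal{B}$, since $\Diamond 0=0$ and $\Diamond 1=1$, so $\mathsf{Cm}(\mathbb{K}_1)\in\mathop{\textup{IS}}(\mathbf{B})$.

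For the cover, there are two genuine gaps. First, your finite-frame step is garbled: a pair of adjacent points in a reflexive symmetric frame is almost never a generated subframe (generated subframes are unions of connected components). What does work is that any connected $\KTB$-frame with at least two points admits a bounded morphism onto $\mathbb{K}_2$ (two-colour a spanning tree), giving $\mathsf{Cm}(\mathbb{K}_2)\in\mathop{\textup{IS}}(\mathsf{Cm}(\mathbb{F}))$. Second, and more seriously, your reduction of the infinite case to the finite one fails: J\'onsson's Lemma (Proposition~\ref{ua-basics}(iii)) does not produce a \emph{finite} subdirectly irreducible member of $\mathcal{V}$ from an infinite one. You need a direct argument that every subdirectly irreducible $\mathbf{B}\in\mathcal{B}$ with $|\mathbf{B}|>2$ has $\mathsf{Cm}(\mathbb{K}_2)\in\mathop{\textup{HS}}(\mathbf{B})$. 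You also have the case analysis inverted: if $\Diamond a=1$ for every nonzero $a$, then for any $0<a<1$ the set $\{0,a,\neg a,1\}$ is already a subalgebra isomorphic to $\mathsf{Cm}(\mathbb{K}_2)$, so that is the \emph{easy} case; and Lemma~\ref{nicegen} cannot be invoked there anyway, since its hypothesis $\exists x\,(x\neq 0\ \&\ \Diamond x\neq 1)$ fails. The case requiring real work is when some nonzero $a$ has $\Diamond a\neq 1$, and for that you have not given an argument. That is precisely the content of Miyazaki's result in~\cite{Miya07b}.
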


A natural question then arises about the cardinality of the ``set'' of varieties covering 
\(\mathop{\textup{Var}}(\mathsf{Cm}(\mathbb{K}_2))\). It is easy to show that
this ``set'' is infinite: countably 
many varieties covering \(\mathop{\textup{Var}}(\mathsf{Cm}(\mathbb{K}_2))\)
were constructed by the second and fourth author in an unpublished note~\cite{KS05},
using certain finite graphs. But finite graphs clearly could not suffice for
a construction of uncountably many varieties covering 
\(\mathop{\textup{Var}}(\mathsf{Cm}(\mathbb{K}_2))\). A construction of an
appropriate uncountable family 
of countably infinite graphs began by finding two finite ones, called below
\(\mathbb{G}_1\) and \(\mathbb{G}_2\): 

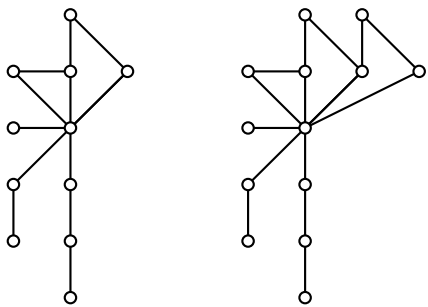
\begin{figure}[h]
\centering
\begin{tikzpicture}[scale=0.75]
\draw[thick] (0,0) -- (-1,0);
\draw[thick] (0,0) -- (1,1);
\draw[thick] (0,0) -- (-1,-1) -- (-1,-2);
\draw[thick] (1,1) -- (0,2) -- (0,-3);
\draw[thick] (0,1) -- (-1,1) -- (0,0) -- (1,1);
\draw[thick, fill=white] (-1,1) circle (0.1);
\draw[thick, fill=white] (0,1) circle (0.1);
\draw[thick, fill=white] (1,1) circle (0.1);
\draw[thick, fill=white] (0,2) circle (0.1);
\draw[thick, fill=white] (0,0) circle (0.1);
\draw[thick, fill=white] (0,-1) circle (0.1);
\draw[thick, fill=white] (0,-2) circle (0.1);
\draw[thick, fill=white] (0,-3) circle (0.1);
\draw[thick, fill=white] (-1,-1) circle (0.1);
\draw[thick, fill=white] (-1,-2) circle (0.1);
\draw[thick, fill=white] (-1,0) circle (0.1);
\end{tikzpicture}\qquad\qquad\begin{tikzpicture}[scale=0.75]
\draw[thick] (0,0) -- (-1,0);
\draw[thick] (0,0) -- (1,1);
\draw[thick] (0,0) -- (2,1);
\draw[thick] (0,0) -- (-1,-1) -- (-1,-2);
\draw[thick] (1,1) -- (0,2) -- (0,-3);
\draw[thick] (0,1) -- (-1,1) -- (0,0) -- (1,1) -- (1,2) -- (2,1);
\draw[thick, fill=white] (-1,1) circle (0.1);
\draw[thick, fill=white] (0,1) circle (0.1);
\draw[thick, fill=white] (1,1) circle (0.1);
\draw[thick, fill=white] (2,1) circle (0.1);
\draw[thick, fill=white] (0,2) circle (0.1);
\draw[thick, fill=white] (1,2) circle (0.1);
\draw[thick, fill=white] (0,0) circle (0.1);
\draw[thick, fill=white] (0,-1) circle (0.1);
\draw[thick, fill=white] (0,-2) circle (0.1);
\draw[thick, fill=white] (0,-3) circle (0.1);
\draw[thick, fill=white] (-1,-1) circle (0.1);
\draw[thick, fill=white] (-1,-2) circle (0.1);
\draw[thick, fill=white] (-1,0) circle (0.1);
\end{tikzpicture}\label{nauty-graphs}
\caption{Graph drawings of \(\mathbb{G}_1\) and \(\mathbb{G}_2\) (with loops omitted).}
\end{figure}

These were found by the second and fourth authors through a computer
search, performed with the help of Brendan McKay's \texttt{nauty}
(see~\cite{nauty}). All non-isomorphic graphs with up to 13 vertices were generated,
and checked for the property of not admitting any bounded morphism, except
the identity map, the constant map onto $\mathbb{K}_1$, and a bounded morphism onto
$\mathbb{K}_2$. By finiteness, this is sufficient (and also necessary) for the logic
of such a graph $\mathbb{G}$ to be of codimension 3, or, equivalently, for
\(\mathop{\textup{Var}}(\mathsf{Cm}(\mathbb{G}))\) to be a cover of  
\(\mathop{\textup{Var}}(\mathsf{Cm}(\mathbb{K}_2))\).

Two of these graphs are depicted in
Figure~\ref{nauty-graphs}. They were the only ones that revealed a workable
family resemblance to one another.
They were also so different from the finite graphs considered
in~\cite{KS05} as to be completely unexpected to the finders. 
Verifying by hand that the bounded morphism condition mentioned above indeed
holds, is tedious but not difficult, and so it was proved that
\(\mathop{\textup{Var}}(\mathsf{Cm}(\mathbb{G}_1))\) and
\(\mathop{\textup{Var}}(\mathsf{Cm}(\mathbb{G}_2))\) indeed cover   
\(\mathop{\textup{Var}}(\mathsf{Cm}(\mathbb{K}_2))\), confirming the
computer-assisted finding.

Extending the zigzaging pattern infinitely to the right is then a no-brainer, and
a suitable twisting of the zigzag produces an uncountable family of pairwise
non-isomorphic  graphs. The next step is to take certain subalgebras of the
complex algebras of 
these infinite graphs (unlike in the finite case, the full complex algebras
may not do), and prove that the varieties they generate are pairwise
distinct and cover \(\mathop{\textup{Var}}(\mathsf{Cm}(\mathbb{K}_2))\). The
three last authors did 
produce a rough approximation to a proof, which was convincing enough (for them)
to announce the result (see~\cite{KMS07}). However, the full
proof was never published, 
and in fact it did not exist, as the details were never satisfactorily verified.
The three authors dispersed around the globe and the proof was left unfinished.
It took about 10 years, and the first author, to produce a complete proof.
We are going to present it now.

\section{Construction}

Before we begin, we make one more remark on the methods. The construction presented 
below may at first glance suggest that the reasoning about ultrapowers, which
will play an important part in the proofs, is not necessary, because everything
that could go wrong in an ultrapower already goes wrong in the original
algebra. Were it so, the proofs could be greatly simplified, but unfortunately
the first glance is misleading. There exists an infinite 
\(\KTB\)-algebra $\mathbf{A}$ such that $\textup{HS}(\mathbf{A})$
does not contain \(\mathsf{Cm}(\mathbb{K}_3)\), but
$\textup{HSP}_\textup{U}(\mathbf{A})$ does, so $\mathbf{A}$ does not
generate a cover of $\textup{Var}(\mathbb{K}_2)$. Considering ultrapowers is
necessary, at least in principle.

Now, for the construction. Firstly, we will need the following Lemma, which is
an easy consequence of Proposition~\ref{ua-basics}(iii).  

\begin{lemma}\label{k2si}
We have \(\mathop{\textup{Si}}(\mathop{\textup{Var}}(\mathsf{Cm}(\mathbb{K}_2)))
= \mathop{\textup{I}}(\{\mathsf{Cm}(\mathbb{K}_1),
\mathsf{Cm}(\mathbb{K}_2)\})\). 
\end{lemma}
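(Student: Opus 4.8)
The plan is to unwind the right-hand side using the universal-algebraic facts already recorded. Since \(\vKTB\) is congruence distributive (Proposition~\ref{CEP-and-CD}), Proposition~\ref{ua-basics}(iii) applied to \(\mathcal{C} = \{\mathsf{Cm}(\mathbb{K}_2)\}\) gives
\[
\mathop{\textup{Si}}(\mathop{\textup{Var}}(\mathsf{Cm}(\mathbb{K}_2)))
= \mathop{\textup{Si}}(\mathop{\textup{HSP}_\textup{U}}(\{\mathsf{Cm}(\mathbb{K}_2)\})).
\]
Now \(\mathsf{Cm}(\mathbb{K}_2)\) is finite — concretely, it is the four-element Boolean algebra on which \(\Diamond\) maps \(0\) to \(0\) and every nonzero element to \(1\) — so each of its ultrapowers is isomorphic to it, whence \(\mathop{\textup{P}_\textup{U}}(\{\mathsf{Cm}(\mathbb{K}_2)\}) = \mathop{\textup{I}}(\{\mathsf{Cm}(\mathbb{K}_2)\})\) and the right-hand side reduces to \(\mathop{\textup{Si}}(\mathop{\textup{HS}}(\mathsf{Cm}(\mathbb{K}_2)))\). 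It therefore suffices to determine \(\mathop{\textup{HS}}(\mathsf{Cm}(\mathbb{K}_2))\) and read off its subdirectly irreducible members.

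Next I would describe the subalgebras of \(\mathsf{Cm}(\mathbb{K}_2)\). Its Boolean reduct, the four-element Boolean algebra, has exactly one proper subalgebra, namely \(\{0,1\}\); and since \(\Diamond 0 = 0\) and \(\Diamond 1 = 1\), the set \(\{0,1\}\) is closed under \(\Diamond\), and the resulting subalgebra is (a copy of) \(\mathsf{Cm}(\mathbb{K}_1)\). So, up to isomorphism, \(\mathop{\textup{S}}(\mathsf{Cm}(\mathbb{K}_2))\) consists of \(\mathsf{Cm}(\mathbb{K}_1)\), \(\mathsf{Cm}(\mathbb{K}_2)\) and the trivial algebra. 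A quick check shows \(\mathsf{Cm}(\mathbb{K}_2)\) is simple: a nontrivial congruence would have to identify an atom with \(0\), but then \(1 = \Diamond(\text{that atom})\) would be identified with \(\Diamond 0 = 0\), collapsing the algebra. By Proposition~\ref{ua-basics}(1), together with the congruence extension property of \(\vKTB\) (Proposition~\ref{CEP-and-CD}), every subalgebra of \(\mathsf{Cm}(\mathbb{K}_2)\) — in particular \(\mathsf{Cm}(\mathbb{K}_1)\) — is simple as well. Hence \(\mathop{\textup{HS}}(\mathsf{Cm}(\mathbb{K}_2))\) equals, up to isomorphism, \(\{\mathsf{Cm}(\mathbb{K}_1),\, \mathsf{Cm}(\mathbb{K}_2),\, \text{trivial algebra}\}\), whose subdirectly irreducible members are exactly \(\mathsf{Cm}(\mathbb{K}_1)\) and \(\mathsf{Cm}(\mathbb{K}_2)\). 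Combining with the displayed equality gives the lemma.

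I do not anticipate a genuine obstacle here: the statement is ``easy'' precisely because the finiteness of \(\mathsf{Cm}(\mathbb{K}_2)\) renders the ultrapower step vacuous, and the algebra is small enough to analyse directly. The only points calling for a moment's care are the verification that \(\mathsf{Cm}(\mathbb{K}_2)\), and hence each of its subalgebras, is simple, and remembering to exclude the trivial algebra when passing from \(\mathop{\textup{HS}}\) to \(\mathop{\textup{Si}}\).
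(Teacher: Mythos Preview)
Your approach is exactly the one the paper indicates: it states only that the lemma ``is an easy consequence of Proposition~\ref{ua-basics}(iii)'', and you have correctly unpacked that consequence by combining J\'onsson's Lemma with the finiteness and simplicity of \(\mathsf{Cm}(\mathbb{K}_2)\). One small slip: after observing that the four-element Boolean reduct has exactly one proper subalgebra \(\{0,1\}\), you then list the trivial algebra among the members of \(\mathop{\textup{S}}(\mathsf{Cm}(\mathbb{K}_2))\); it is not a subalgebra (since \(0\neq 1\) forces every subalgebra to have at least two elements) but enters \(\mathop{\textup{HS}}\) only via \(\mathop{\textup{H}}\). This does not affect your conclusion.
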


Next, we state a sufficient set of conditions for an algebra in
\(\vKTB\) to generate a variety of height 3.  

\begin{lemma}\label{conditions}
Let \(\mathbf{A} \in \mathcal{B}\) and assume that \(\mathbf{A}\) has the
following properties: 
\begin{enumerate}
\item  \(\mathbf{A}\) is infinite;
\item
 \(\mathsf{Cm}(\mathbb{K}_2) \in \mathop{\textup{IS}} (\mathbf{A})\);
\item
every member of \(\mathop{\textup{P}}_\textup{U}(\mathbf{A})\) is simple;
\item
for all \(\mathbf{B} \in \mathop{\textup{ISP}}_\textup{U} (\mathbf{A})\), we have \(\mathbf{B} \cong \mathsf{Cm}(\mathbb{K}_1)\), \(\mathbf{B} \cong \mathsf{Cm}(\mathbb{K}_2)\) or \(\mathbf{A} \in \mathop{\textup{IS}} (\mathbf{B} )\).
\end{enumerate}
Then \(\mathop{\textup{Var}}(\mathbf{A})\) is of height 3. 
\end{lemma}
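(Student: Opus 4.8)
The plan is to prove that \(\mathop{\textup{Var}}(\mathbf{A})\) is a cover of \(\mathop{\textup{Var}}(\mathsf{Cm}(\mathbb{K}_2))\) in \(\mathrm{Subv}(\vKTB)\); together with the chain consisting of the trivial variety, \(\mathop{\textup{Var}}(\mathsf{Cm}(\mathbb{K}_1))\) and \(\mathop{\textup{Var}}(\mathsf{Cm}(\mathbb{K}_2))\) supplied by Theorem~\ref{known}, this exhibits a covering chain of length \(3\) reaching \(\mathop{\textup{Var}}(\mathbf{A})\), i.e.\ height \(3\). The inclusion \(\mathop{\textup{Var}}(\mathsf{Cm}(\mathbb{K}_2)) \subseteq \mathop{\textup{Var}}(\mathbf{A})\) is immediate from condition~(2). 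It is \emph{strict}: \(\mathbf{A}\) itself lies in \(\mathop{\textup{P}_\textup{U}}(\mathbf{A})\), hence is simple by~(3), hence subdirectly irreducible; if \(\mathbf{A}\) belonged to \(\mathop{\textup{Var}}(\mathsf{Cm}(\mathbb{K}_2))\), Lemma~\ref{k2si} would force \(\mathbf{A} \cong \mathsf{Cm}(\mathbb{K}_1)\) or \(\mathbf{A} \cong \mathsf{Cm}(\mathbb{K}_2)\), contradicting~(1).

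The main work is to pin down \(\mathop{\textup{Si}}(\mathop{\textup{Var}}(\mathbf{A}))\). By congruence distributivity (Proposition~\ref{CEP-and-CD}) and Proposition~\ref{ua-basics}(iii), \(\mathop{\textup{Si}}(\mathop{\textup{Var}}(\mathbf{A})) = \mathop{\textup{Si}}(\mathop{\textup{HSP}_\textup{U}}(\mathbf{A}))\). Using the congruence extension property (Proposition~\ref{CEP-and-CD}) via Proposition~\ref{ua-basics}(ii) to commute \(\mathop{\textup{H}}\) past \(\mathop{\textup{S}}\), we get \(\mathop{\textup{HSP}_\textup{U}}(\mathbf{A}) = \mathop{\textup{SHP}_\textup{U}}(\mathbf{A})\); and since every algebra in \(\mathop{\textup{P}_\textup{U}}(\mathbf{A})\) is simple by~(3), its homomorphic images are trivial or isomorphic to it, so \(\mathop{\textup{HSP}_\textup{U}}(\mathbf{A})\) consists of trivial algebras together with members of \(\mathop{\textup{ISP}_\textup{U}}(\mathbf{A})\). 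As subdirectly irreducible algebras are nontrivial, this yields \(\mathop{\textup{Si}}(\mathop{\textup{Var}}(\mathbf{A})) \subseteq \mathop{\textup{ISP}_\textup{U}}(\mathbf{A})\), and now condition~(4) tells us that every \(\mathbf{B} \in \mathop{\textup{Si}}(\mathop{\textup{Var}}(\mathbf{A}))\) satisfies \(\mathbf{B} \cong \mathsf{Cm}(\mathbb{K}_1)\), \(\mathbf{B} \cong \mathsf{Cm}(\mathbb{K}_2)\), or \(\mathbf{A} \in \mathop{\textup{IS}}(\mathbf{B})\).

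From this the covering property follows quickly. Let \(\mathcal{W}\) be a subvariety with \(\mathop{\textup{Var}}(\mathsf{Cm}(\mathbb{K}_2)) \subseteq \mathcal{W} \subseteq \mathop{\textup{Var}}(\mathbf{A})\). By Lemma~\ref{k2si} and Proposition~\ref{ua-basics}(iv) we have \(\mathop{\textup{Var}}(\mathsf{Cm}(\mathbb{K}_2)) = \mathop{\textup{Var}}(\{\mathsf{Cm}(\mathbb{K}_1), \mathsf{Cm}(\mathbb{K}_2)\})\), and both \(\mathsf{Cm}(\mathbb{K}_1)\) and \(\mathsf{Cm}(\mathbb{K}_2)\) lie in \(\mathop{\textup{Si}}(\mathcal{W})\). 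If \(\mathop{\textup{Si}}(\mathcal{W}) = \mathop{\textup{I}}(\{\mathsf{Cm}(\mathbb{K}_1), \mathsf{Cm}(\mathbb{K}_2)\})\), then \(\mathcal{W} = \mathop{\textup{Var}}(\mathop{\textup{Si}}(\mathcal{W})) = \mathop{\textup{Var}}(\mathsf{Cm}(\mathbb{K}_2))\) by Proposition~\ref{ua-basics}(iv). Otherwise \(\mathop{\textup{Si}}(\mathcal{W}) \subseteq \mathop{\textup{Si}}(\mathop{\textup{Var}}(\mathbf{A}))\) contains some \(\mathbf{B}\) with \(\mathbf{B} \not\cong \mathsf{Cm}(\mathbb{K}_1)\) and \(\mathbf{B} \not\cong \mathsf{Cm}(\mathbb{K}_2)\), so by the preceding paragraph \(\mathbf{A} \in \mathop{\textup{IS}}(\mathbf{B}) \subseteq \mathcal{W}\), whence \(\mathop{\textup{Var}}(\mathbf{A}) \subseteq \mathcal{W}\) and \(\mathcal{W} = \mathop{\textup{Var}}(\mathbf{A})\). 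Thus no variety lies strictly between \(\mathop{\textup{Var}}(\mathsf{Cm}(\mathbb{K}_2))\) and \(\mathop{\textup{Var}}(\mathbf{A})\), which completes the proof.

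The step I expect to require the most care is the reduction in the second paragraph: unravelling \(\mathop{\textup{HSP}_\textup{U}}(\mathbf{A})\) to \(\mathop{\textup{ISP}_\textup{U}}(\mathbf{A})\) up to trivial algebras, checking that Proposition~\ref{ua-basics}(ii) is applied to the class \(\mathop{\textup{P}_\textup{U}}(\mathbf{A})\) and not merely to \(\mathbf{A}\), and confirming that the three alternatives of condition~(4) are genuinely exhaustive on \(\mathop{\textup{Si}}(\mathop{\textup{Var}}(\mathbf{A}))\). Everything else is a routine application of Propositions~\ref{CEP-and-CD} and~\ref{ua-basics} together with Lemma~\ref{k2si}.
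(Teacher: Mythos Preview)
Your proof is correct and follows essentially the same route as the paper's: reduce \(\mathop{\textup{Si}}(\mathop{\textup{Var}}(\mathbf{A}))\) to \(\mathop{\textup{ISP}_\textup{U}}(\mathbf{A})\) via J\'onsson's Lemma, CEP, and simplicity of ultrapowers, then use condition~(iv) together with Lemma~\ref{k2si} to show that any intermediate variety collapses to one of the endpoints. The only cosmetic difference is that the paper records the equality \(\mathop{\textup{Si}}(\mathop{\textup{Var}}(\mathbf{A})) = \mathop{\textup{ISP}_\textup{U}}(\mathbf{A})\) (using Proposition~\ref{ua-basics}(i) for the reverse inclusion), whereas you use only the inclusion \(\subseteq\), which is all that is actually needed.
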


\begin{proof}
Based on (iii), \(\mathop{\textup{HP}}_\mathrm{U}(\mathbf{A}) = \mathop{\textup{I}}(\{ \mathbf{T}\} \cup\mathop{\textup{P}}_\textup{U}(\mathbf{A})) \), for some trivial \(\mathbf{T} \in \mathcal{B}\). So, by Proposition \ref{ua-basics}, \(\mathop{\textup{Si}}(\mathop{\textup{Var}}(\mathbf{A})) = \mathop{\textup{Si}}(\mathop{\textup{HSP}}_\mathrm{U}(\mathbf{A})) =\mathop{\textup{Si}}(\mathop{\textup{SHP}}_\mathrm{U}(\mathbf{A}))  = \mathop{\textup{ISP}}_\mathrm{U}(\mathbf{A})\). Clearly, \(\mathbf{A} \in \mathop{\textup{ISP}}_\mathrm{U}(\mathbf{A})\), so (i), (ii) and Lemma \ref{k2si} tell us that \(\mathop{\textup{Var}}(\mathbf{A})\) properly extends \(\mathop{\textup{Var}}(\mathsf{Cm}(\mathbb{K}_2))\).  Let \(\mathcal{V}\) be a variety with \(\mathop{\textup{Var}}(\mathsf{Cm}(\mathbb{K}_2)) \subseteq \mathcal{V} \subseteq \mathop{\textup{Var}}(\mathbf{A})\). Since \(\mathcal{V} \subseteq \mathop{\textup{Var}}(\mathbf{A})\), we have \(\mathop{\textup{Si}}(\mathcal{V}) \subseteq \mathop{\textup{Si}}(\mathop{\textup{Var}}(\mathbf{A})) = \mathop{\textup{ISP}}_\mathrm{U}(\mathbf{A})\). Combining this with (iv) and Lemma \ref{k2si}, we f\-ind that \(\mathop{\textup{Si}}(\mathcal{V}) \subseteq \mathop{\textup{Si}}(\mathsf{Cm}(\mathbb{K}_2)) \) or \(\mathbf{A} \in \mathop{\textup{IS}}(\mathcal{V}) = \mathcal{V}\). So, by Proposition \ref{ua-basics}, we must have \(\mathcal{V} = \mathop{\textup{Var}}(\mathsf{Cm}(\mathbb{K}_2))\) or \(\mathcal{V} = \mathop{\textup{Var}}(\mathbf{A})\). Hence, \(\mathop{\textup{Var}}(\mathbf{A})\) covers \(\mathop{\textup{Var}}(\mathsf{Cm}(\mathbb{K}_2))\), so \(\mathop{\textup{Var}}(\mathbf{A})\) has height 3, as claimed. \end{proof}

Our construction of a continuum of subvarieties of \(\vKTB\) of
height 3 begins with the following definition.

\begin{definition}\label{N-graph}
Let \(\mathbb{E}\) denote the set of positive even numbers, let \(A = \{ a\}\),
\(B = \{b_1, b_2, b_3\}\), \(C = \{c_1, c_2\}\), \(D = \{d \}\), \(U = \{u_i
\mid i \in \omega \setminus \{0\}\}\) and \(L = \{\ell_i \mid  i\in \omega \}\)
be pairwise disjoint, and assume that \(u_i \neq u_j\), \(\ell_i \neq \ell_j\),
\( b_i \neq b_j\) and \(c_i \neq c_j\) whenever \(i \neq j\). Now, define \(U_i
\deq \{u_i\}\), for all \(i\in \omega \setminus \{0\}\), \(L_i \deq
\{\ell_i\}\),  for all \(i \in \omega\), \(B_i \deq \{b_i\}\), for all \(i \in
\{1,2,3\}\), \(C_i \deq \{c_i\}\), for all \(i \in \{1,2\}\), and  \(P \deq \{
b_1, c_1, d\}\). For each \(N \subseteq \mathbb{E}\), let \(\mathbb{F}_N\) be
the graph \(\langle W; R_N \rangle\), where \(W \deq A \cup B \cup C \cup D \cup
U \cup L\) and \(R_N\) is the relation def\-ined by 
\[
  x \mathrel{R_N} y \iff x=y \textrm{ or } \{x,y\} = \begin{cases}   \{a, b_i\},
    \textrm{ for some } i \in \{1,2,3\},\\
 \{b_i, c_i\}, \textrm{ for some } i \in \{1,2\},\\
 \{c_1, d\},\\
 \{\ell_0, \ell_1\}, \\ 
 \{a, \ell_i \}, \textrm{ for some } i\in \omega, \\
 \{\ell_i, u_i \}  , \textrm{ for some } i \in \omega \setminus \{0\}, \\
 \{\ell_i, u_{i-1}\}, \textrm{ for some } i\in \mathbb{E}, \\
 \{\ell_i, u_{i+1}\}, \textrm{ for some } i \in N \textrm{ or } \\ 
 \{\ell_{i+1}, u_i\}, \textrm{ for some } i \in \mathbb{E} \setminus N.\end{cases}
\]
\end{definition}

As usual with graphs, a picture is worth a thousand words. Certainly it is worth
all the words of the definition above.  Here it is.

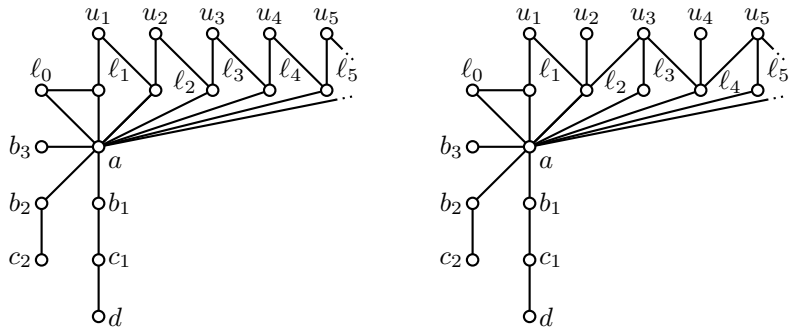
\begin{figure}[h]
\centering
\begin{tikzpicture}[scale=0.75]
\draw[thick, dotted] (4.25,1.75) -- (4.5,1.5);
\draw[thick, dotted] (4.15,0.83) -- (4.5,0.9);
\draw[thick] (4.16,0.832) -- (0,0);
\draw[thick] (4.255,1.745) -- (4,2);
\draw[thick] (0,0) -- (-1,0);
\draw[thick] (0,0) -- (1,1);
\draw[thick] (0,0) -- (2,1);
\draw[thick] (0,0) -- (3,1);
\draw[thick] (0,0) -- (4,1);
\draw[thick] (0,0) -- (-1,-1) -- (-1,-2);
\draw[thick] (1,1) -- (0,2) -- (0,-3);
\draw[thick] (0,1) -- (-1,1) -- (0,0) -- (1,1) -- (1,2) -- (2,1) -- (2,2) -- (3,1) -- (3,2);
\draw[thick] (3,2) -- (4,1) -- (4,2);
\draw[thick, fill=white] (-1,1) circle (0.1);
\draw[thick, fill=white] (0,1) circle (0.1);
\draw[thick, fill=white] (1,1) circle (0.1);
\draw[thick, fill=white] (2,1) circle (0.1);
\draw[thick, fill=white] (3,1) circle (0.1);
\draw[thick, fill=white] (4,1) circle (0.1);
\draw[thick, fill=white] (0,2) circle (0.1);
\draw[thick, fill=white] (1,2) circle (0.1);
\draw[thick, fill=white] (2,2) circle (0.1);
\draw[thick, fill=white] (3,2) circle (0.1);
\draw[thick, fill=white] (4,2) circle (0.1);
\draw[thick, fill=white] (0,0) circle (0.1);
\draw[thick, fill=white] (0,-1) circle (0.1);
\draw[thick, fill=white] (0,-2) circle (0.1);
\draw[thick, fill=white] (0,-3) circle (0.1);
\draw[thick, fill=white] (-1,-1) circle (0.1);
\draw[thick, fill=white] (-1,-2) circle (0.1);
\draw[thick, fill=white] (-1,0) circle (0.1);
\draw[above] (0,2) node {\(u_1\)};
\draw[above] (1,2) node {\(u_2\)};
\draw[above] (2,2) node {\(u_3\)};
\draw[above] (3,2) node {\(u_4\)};
\draw[above] (4,2) node {\(u_5\)};
\draw[above] (-1,1) node {\(\ell_0\)};
\draw[above right] (0,1) node {\(\ell_1\)};
\draw[right] (1.15,1.15) node {\(\ell_2\)};
\draw[above right] (2,1) node {\(\ell_3\)};
\draw[above right] (3,1) node {\(\ell_4\)};
\draw[above right] (4,1) node {\(\ell_5\)};
\draw[below right] (0,0) node {\(a\)};
\draw[right] (0,-1) node {\(b_1\)};
\draw[right] (0,-2) node {\(c_1\)};
\draw[right] (0,-3) node {\(d\)};
\draw[left] (-1,-1) node {\(b_2\)};
\draw[left] (-1,-2) node {\(c_2\)};
\draw[left] (-1,0) node {\(b_3\)};
\end{tikzpicture} \qquad \begin{tikzpicture}[scale=0.75]
\draw[thick, dotted] (4.25,1.75) -- (4.5,1.5);
\draw[thick, dotted] (4.15,0.83) -- (4.5,0.9);
\draw[thick] (4.16,0.832) -- (0,0);
\draw[thick] (4.255,1.745) -- (4,2);
\draw[thick] (0,0) -- (-1,0);
\draw[thick] (0,0) -- (1,1);
\draw[thick] (0,0) -- (2,1);
\draw[thick] (0,0) -- (3,1);
\draw[thick] (0,0) -- (4,1);
\draw[thick] (0,0) -- (-1,-1) -- (-1,-2);
\draw[thick] (1,1) -- (0,2) -- (0,-3);
\draw[thick] (0,1) -- (-1,1) -- (0,0) -- (1,1) -- (2,2); 
\draw[thick] (1,1) -- (1,2); 
\draw[thick] (2,1) -- (2,2);
\draw[thick] (3,1) -- (3,2);
\draw[thick] (2,2) -- (3,1) -- (4,2);
\draw[thick] (4,1) -- (4,2);
\draw[thick, fill=white] (-1,1) circle (0.1);
\draw[thick, fill=white] (0,1) circle (0.1);
\draw[thick, fill=white] (1,1) circle (0.1);
\draw[thick, fill=white] (2,1) circle (0.1);
\draw[thick, fill=white] (3,1) circle (0.1);
\draw[thick, fill=white] (4,1) circle (0.1);
\draw[thick, fill=white] (0,2) circle (0.1);
\draw[thick, fill=white] (1,2) circle (0.1);
\draw[thick, fill=white] (2,2) circle (0.1);
\draw[thick, fill=white] (3,2) circle (0.1);
\draw[thick, fill=white] (4,2) circle (0.1);
\draw[thick, fill=white] (0,0) circle (0.1);
\draw[thick, fill=white] (0,-1) circle (0.1);
\draw[thick, fill=white] (0,-2) circle (0.1);
\draw[thick, fill=white] (0,-3) circle (0.1);
\draw[thick, fill=white] (-1,-1) circle (0.1);
\draw[thick, fill=white] (-1,-2) circle (0.1);
\draw[thick, fill=white] (-1,0) circle (0.1);
\draw[above] (0,2) node {\(u_1\)};
\draw[above] (1,2) node {\(u_2\)};
\draw[above] (2,2) node {\(u_3\)};
\draw[above] (3,2) node {\(u_4\)};
\draw[above] (4,2) node {\(u_5\)};
\draw[above] (-1,1) node {\(\ell_0\)};
\draw[above right] (0,1) node {\(\ell_1\)};
\draw[right] (1.15,1.15) node {\(\ell_2\)};
\draw[above right] (2,1) node {\(\ell_3\)};
\draw[right] (3.15,1.15) node {\(\ell_4\)};
\draw[above right] (4,1) node {\(\ell_5\)};
\draw[below right] (0,0) node {\(a\)};
\draw[right] (0,-1) node {\(b_1\)};
\draw[right] (0,-2) node {\(c_1\)};
\draw[right] (0,-3) node {\(d\)};
\draw[left] (-1,-1) node {\(b_2\)};
\draw[left] (-1,-2) node {\(c_2\)};
\draw[left] (-1,0) node {\(b_3\)};
\end{tikzpicture}
\caption{Graph drawings of (finite sections of) \(\mathbb{F}_\varnothing\) and
  \(\mathbb{F}_{\{2,4\}}\) (with loops omitted).} 
\label{fig1}
\end{figure} 
Accordingly, in the proofs, we will frequently refer to Fig.~\ref{fig1}, as well as to
Fig.~\ref{fig2} below, rather than to Definition~\ref{N-graph}. 
Next, we define the algebras essential to our construction. The notation is
as in Definition~\ref{N-graph}.

\begin{definition}\label{N-alg}
For each \(N \subseteq \mathbb{E}\), let \(\mathbf{D}_N\) be the
subalgebra of \(\textsf{Cm}(\mathbb{F}_N)\) generated by \(D\) and let \(D_N\)
be the universe of \(\mathbf{D}_N\).
\end{definition}

From now on, we will use \(\Diamond_N\) to
stand for \(R_N^{-1}\), and we will omit the subscript \(N\) if there is no danger
of confusion.

\begin{lemma}\label{singletons}
Let \(N \subseteq \mathbb{E}\). Then \(A, B_1, B_3, C_1, D, L_i, U_j, B_2 \cup L, C_2 \cup U \in D_N\), for all \(i \in \omega\) and all \(j \in \omega \setminus \{0\}\).
\end{lemma}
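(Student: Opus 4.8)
The plan is to start from the single generator $D=\{d\}$ of $\mathbf{D}_N$ and to ``peel off'' the listed sets one at a time by repeatedly applying $\Diamond=\Diamond_N$ and taking Boolean combinations. Since $R_N$ is reflexive and symmetric, $\Diamond X$ is just the union of the closed neighbourhoods of the elements of $X$, so each step reduces to a neighbourhood computation that can be read straight off Fig.~\ref{fig1} (equivalently, off the case distinction in Definition~\ref{N-graph}). Throughout I write $X\setminus Y$ for the term $X\cap{\sim}Y$.

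First the ``static'' part of $\mathbb{F}_N$, which is independent of the twist. From $\Diamond D=\{c_1,d\}$ we obtain $C_1=\Diamond D\setminus D$; from $\Diamond C_1=\{b_1,c_1,d\}$ we obtain $B_1=\Diamond C_1\setminus(C_1\cup D)$; from $\Diamond B_1=\{a,b_1,c_1\}$ we obtain $A=\Diamond B_1\setminus(B_1\cup C_1)$. Next $\Diamond A=\{a,b_1,b_2,b_3\}\cup L$, so $X\deq\{b_2,b_3\}\cup L=\Diamond A\setminus(A\cup B_1)\in D_N$; using $\Diamond\{b_2\}=\{a,b_2,c_2\}$, $\Diamond\{b_3\}=\{a,b_3\}$ and $\Diamond L=\{a\}\cup L\cup U$ we get $C_2\cup U=\Diamond X\setminus(X\cup A)\in D_N$. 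One more step, $\Diamond(C_2\cup U)=\{b_2,c_2\}\cup U\cup(L\setminus L_0)$, gives $Z\deq\{b_2\}\cup(L\setminus L_0)\in D_N$, and then $\Diamond Z=\{a,b_2,c_2\}\cup L\cup U$ (here $\ell_0\in\Diamond Z$ since $\ell_1\in Z$ and $\ell_0\mathrel{R_N}\ell_1$) yields $L_0=(\Diamond Z\setminus(Z\cup A))\setminus(C_2\cup U)\in D_N$. Finally $X\setminus Z=\{b_3,\ell_0\}$, so $B_3=(X\setminus Z)\setminus L_0\in D_N$ and $B_2\cup L=X\setminus B_3\in D_N$. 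It remains to isolate every $L_i$ and every $U_j$.

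For this I would proceed by induction ``up the ladder'' of Fig.~\ref{fig1}, starting from $L_1=\Diamond L_0\setminus(A\cup L_0)$ and $U_1=\Diamond L_1\setminus(A\cup L_0\cup L_1)$. The useful structural fact is that the closed neighbourhood of $\ell_m$ lies inside $\{a,\ell_m,u_{m-1},u_m,u_{m+1}\}$ and that of $u_m$ inside $\{u_m,\ell_{m-1},\ell_m,\ell_{m+1}\}$, and that the only edges occurring here whose presence depends on $N$ are the twist edges $\{\ell_{2k},u_{2k+1}\}$ (present exactly when $2k\in N$) and $\{\ell_{2k+1},u_{2k}\}$ (present exactly when $2k\notin N$). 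Hence, as long as $m$ is odd, or $m$ is even with $m\notin N$, one application of $\Diamond$ to an already-isolated $L_m$ reaches $\{u_{m-1},u_m\}$, from which $U_m$ is obtained by subtracting $A$, $L_m$ and $U_{m-1}$; dually $\Diamond U_m$ reaches $\{\ell_m,\ell_{m+1}\}$ (or $\{\ell_{m-1},\ell_m,\ell_{m+1}\}$), from which $L_{m+1}$ is obtained by subtracting the earlier singletons. These are the straightforward cases.

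The one genuinely awkward situation, which I expect to be the main obstacle, is a block of consecutive even numbers lying in $N$. If $2k\in N$ then $\Diamond L_{2k}$ reaches $u_{2k-1}$, $u_{2k}$ and $u_{2k+1}$ at once, and since $u_{2k}$ is then adjacent only to $\ell_{2k}$ while $\ell_{2k+1}$ is adjacent only to $a$, $\ell_{2k+1}$ and $u_{2k+1}$, no single $\Diamond$-step separates $u_{2k}$ from $u_{2k+1}$, and dually none separates $\ell_{2k+1}$ from $\ell_{2k+2}$. The way around this is to carry the two-element ``bridge'' sets along: $V\deq\{u_{2k},u_{2k+1}\}=\Diamond L_{2k}\setminus(A\cup L_{2k}\cup U_{2k-1})\in D_N$, then $\{\ell_{2k+1},\ell_{2k+2}\}=\Diamond V\setminus(V\cup L_{2k})\in D_N$, and one checks that $\Diamond\{\ell_{2k+1},\ell_{2k+2}\}$ meets $V$ in exactly $\{u_{2k+1}\}$, which isolates $U_{2k+1}$ and hence $U_{2k}=V\setminus U_{2k+1}$; a look-ahead by the same device (iterated through the block when the block is long, e.g.\ when $N=\mathbb{E}$) isolates $U_{2k+2}$, and then $\Diamond U_{2k+2}$ meets $\{\ell_{2k+1},\ell_{2k+2}\}$ in exactly $\{\ell_{2k+2}\}$, giving $L_{2k+2}$ and finally $L_{2k+1}$. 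The delicate points are choosing an induction hypothesis that keeps all the required bridge sets available and verifying that the look-ahead always terminates; once that is set up, every remaining verification is a routine neighbourhood check against Fig.~\ref{fig1}, completing the proof that all the sets listed in Lemma~\ref{singletons} belong to $D_N$.
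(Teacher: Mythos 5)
Your treatment of the ``static'' part and of the straightforward rungs is correct and is essentially the paper's own argument: the paper obtains \(C_2\cup U\) in one shot as \({\sim}\Diamond^4 D\) where you build it incrementally, but the two routes are interchangeable, and in the twist case your bridge \(V=\{u_{2k},u_{2k+1}\}\), the pair \(\{\ell_{2k+1},\ell_{2k+2}\}\), and the splitting \(U_{2k+1}=V\cap\Diamond\{\ell_{2k+1},\ell_{2k+2}\}\), \(U_{2k}=V\cap{\sim}U_{2k+1}\) are exactly the paper's moves in its case \(i+1\in N\). (Your cumulative ``subtract the earlier singletons'' bookkeeping is also the right call, since \(u_m\) can see \(\ell_{m-1}\) when \(m-1\in N\).)

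The gap is the final step of the twist case: separating \(\ell_{2k+1}\) from \(\ell_{2k+2}\). You reduce this to first isolating \(U_{2k+2}\) by ``a look-ahead by the same device (iterated through the block when the block is long)'', and you explicitly leave the induction hypothesis and the termination of that look-ahead unverified. As written this is the one step that does not obviously work: every element of \(D_N\) must be the value of a \emph{finite} term applied to \(D\), so a look-ahead that has to run through a whole block of consecutive elements of \(N\) is unavailable when the block is infinite (e.g.\ \(N=\mathbb{E}\)). In fact no iteration is needed, and this is what has to be said: if \(2k+2\in N\), then \(V'\deq\Diamond\{\ell_{2k+1},\ell_{2k+2}\}\cap{\sim}(\Diamond A\cup\{\ell_{2k+1},\ell_{2k+2}\}\cup U_{2k+1})=\{u_{2k+2},u_{2k+3}\}\) and \(\Diamond V'\cap{\sim}(V'\cup\{\ell_{2k+1},\ell_{2k+2}\})=\{\ell_{2k+3},\ell_{2k+4}\}\), whence \(U_{2k+3}=V'\cap\Diamond\{\ell_{2k+3},\ell_{2k+4}\}\) and \(U_{2k+2}=V'\cap{\sim}U_{2k+3}\); so one extra rung always suffices, and the new pair need not be split in order to proceed. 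That bound, together with a cumulative induction hypothesis carrying the one currently unsplit pair, is precisely what your sketch is missing. The paper avoids the look-ahead altogether: with \(i+1=2k\in N\) it forms \(X=\Diamond(L_{i+2}\cup L_{i+3})\cap{\sim}(\Diamond A\cup U_{i+2})\), notes only that \(u_{i+3}\in X\) while \(a,\ell_{i+2},u_{i+2}\notin X\), so \(\ell_{i+3}\in\Diamond X\) but \(\ell_{i+2}\notin\Diamond X\), and hence \(L_{i+2}=(L_{i+2}\cup L_{i+3})\cap{\sim}\Diamond X\); since only these membership facts are used, it is irrelevant whether \(X\) also contains \(u_{i+4}\) (i.e.\ whether \(i+3\in N\)), so the induction step stays uniform in steps of two and no bridge bookkeeping is needed.
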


\begin{proof}
By definition, \(D \in D_N\). So, based on Fig. \ref{fig1}, \(C_1 = \Diamond D  \cap {\sim} D \in D_N\). Similarly, \(B_1 = \Diamond C_1 \cap {\sim}\Diamond D \in D_N\) and \(C_2 \cup U = {\sim}\Diamond ^4 D \in D_N\), hence we have \(A = \Diamond B_1 \cap {\sim}\Diamond C_1 \in D_N\) and \(B_3 = {\sim}(\Diamond^2(C_2 \cup U) \cup \Diamond^2D) \in D_N\). From this, it follows that \(L_0 = {\sim}(B_3 \cup \Diamond(C_2 \cup U) \cup \Diamond^3 D) \in D_N\), so we have \(B_2 \cup L = (\Diamond(C_2 \cup U) \cup L_0)\cap {\sim}(C_2 \cup U) \in D_N\). Similarly, we must have \(L_1 = \Diamond L_0 \cap {\sim}(A \cup L_0) \in D_N\), which implies that \(U_1 = \Diamond L_1 \cap {\sim}\Diamond A \in D_N\).

It remains to establish that \(L_i, U_j \in D_N\),  for all \(i \in \omega\) and all \(j \in \omega \setminus \{0\}\); we proceed by induction. Assume that \(L_i, U_i \in D_N\), for some odd \(i\in \omega\).

\begin{figure}[h]
\centering
\begin{tikzpicture}
\draw[thick] (3,1) -- (3/2,0) -- (2,1);
\draw[thick] (1,1) -- (0,2) -- (0,1) -- (3/2,0)-- (1,1) -- (1,2) -- (2,1) -- (2,2) -- (3,1) -- (3,2);
\draw[thick, fill=white] (0,1) circle (0.075);
\draw[thick, fill=white] (1,1) circle (0.075);
\draw[thick, fill=white] (2,1) circle (0.075);
\draw[thick, fill=white] (0,2) circle (0.075);
\draw[thick, fill=white] (1,2) circle (0.075);
\draw[thick, fill=white] (2,2) circle (0.075);
\draw[thick, fill=white] (3/2,0) circle (0.075);
\draw[thick, fill=white] (3,2) circle (0.075);
\draw[thick, fill=white] (3,1) circle (0.075);
\draw[above] (0,2) node {\(u_i\)};
\draw[above] (1,2) node {\(u_{i+1}\)};
\draw[above] (2,2) node {\(u_{i+2}\)};
\draw[above] (3,2) node {\(u_{i+3}\)};
\draw[left] (0,1.01) node {\(\ell_{i}\)};
\draw[left] (1,1.01) node {\(\ell_{i+1}\)};
\draw[right] (2,1.01) node {\(\ell_{i+2}\)};
\draw[right] (3,1.01) node {\(\ell_{i+3}\)};
\draw[below] (3/2,0) node {\(a\)};
\end{tikzpicture}\qquad\begin{tikzpicture}
\draw[thick] (1,1) -- (0,2) -- (0,1) -- (3/2,0) -- (1,1) -- (1,2);
\draw[thick] (2,2) -- (3,1) -- (3/2,0) -- (3,1) -- (3,2);
\draw[thick] (1,1) -- (2,2) -- (2,1) -- (3/2,0); 
\draw[thick, fill=white] (0,1) circle (0.075);
\draw[thick, fill=white] (1,1) circle (0.075);
\draw[thick, fill=white] (2,1) circle (0.075);
\draw[thick, fill=white] (0,2) circle (0.075);
\draw[thick, fill=white] (1,2) circle (0.075);
\draw[thick, fill=white] (2,2) circle (0.075);
\draw[thick, fill=white] (3/2,0) circle (0.075);
\draw[thick, fill=white] (3,2) circle (0.075);
\draw[thick, fill=white] (3,1) circle (0.075);
\draw[above] (0,2) node {\(u_i\)};
\draw[above] (1,2) node {\(u_{i+1}\)};
\draw[above] (2,2) node {\(u_{i+2}\)};
\draw[above] (3,2) node {\(u_{i+3}\)};
\draw[left] (0,1.01) node {\(\ell_{i}\)};
\draw[left] (1,1.01) node {\(\ell_{i+1}\)};
\draw[right] (2,1.01) node {\(\ell_{i+2}\)};
\draw[right] (3,1.01) node {\(\ell_{i+3}\)};
\draw[below] (3/2,0) node {\(a\)};
\end{tikzpicture}
\caption{Graph drawings for Lemma \ref{singletons}.}
\label{fig2}
\end{figure}
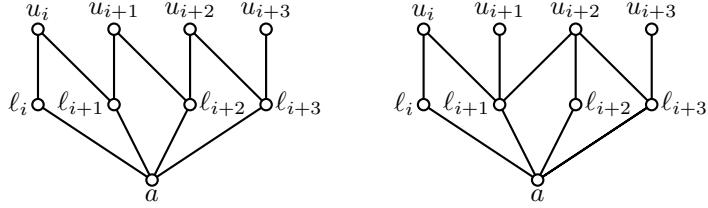 

Firstly, assume that \(i+1 \notin N\). By Fig. \ref{fig2}, \(L_{i+1} = \Diamond U_i \cap {\sim}\Diamond L_i  \in D_N\), which implies that \(U_{i+1} = \Diamond L_{i+1} \cap {\sim}(\Diamond A \cup  U_i) \in D_N\). This implies that \(L_{i+2} = \Diamond U_{i+1} \cap {\sim} \Diamond L_{i+1} \in D_N\), so  \(U_{i+2} = \Diamond L_{i+2} \cap {\sim}(\Diamond A \cup U_{i+1}) \in D_N\). Thus, \(L_{i+1}, L_{i+2}, U_{i+1}, U_{i+2} \in D_N\) if \(i+1 \notin N\).

Next, assume that \(i+1 \in N\). From Fig. \ref{fig2}, \(L_{i+1} = \Diamond U_i \cap {\sim} \Diamond L_i \in D_N\), so we have \(U_{i+1} \cup U_{i+2} =  \Diamond L_{i+1} \cap {\sim}(\Diamond A \cup U_i) \in D_N\). Using these results, we find that we must have \(L_{i+2} \cup L_{i+3} = \Diamond(U_{i+1} \cup U_{i+2}) \cap {\sim}\Diamond L_{i+1} \in D_N\). From this, it follows that \(U_{i+2} = (U_{i+1} \cup U_{i+2}) \cap \Diamond(L_{i+2} \cup L_{i+3}) \in D_N\), which implies that \(U_{i+1} = (U_{i+1} \cup U_{i+2}) \cap {\sim} U_{i+2} \in D_N\). From these results,  \(X \deq \Diamond(L_{i+2} \cup L_{i+3}) \cap {\sim}(\Diamond A \cup U_{i+2}) \in D_N\). Based on Fig \ref{fig2}, we must have \(u_{i+3}\in X\) and \(a,\ell_{i+2}, u_{i+2} \notin X\), hence \(\ell_{i+2}\notin \Diamond X\) and \(\ell_{i+3} \in \Diamond X\). Thus, \(L_{i+2} = (L_{i+2} \cup L_{i+3}) \cap {\sim}\Diamond X \in D_N\), so \(L_{i+1}, L_{i+2}, U_{i+1}, U_{i+2} \in D_N\) if \(i+1 \in N\).

In every case, we have \(L_{i+1}, L_{i+2}, U_{i+1}, U_{i+2} \in D_N\). Hence, by induction,  \(U_i, L_j \in D_N\), for all \(i\in \omega\) and all \(j\in \omega \setminus \{0\}\), so we are done. \end{proof}

\begin{corollary}\label{DN-inf}
Let \(N\subseteq \mathbb{E}\). Then the algebra \(\mathbf{D}_N\) is infinite.
\end{corollary}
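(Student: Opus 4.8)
The plan is to read infinitely many distinct elements of $\mathbf{D}_N$ directly off Lemma~\ref{singletons}, which has already done all the real work. That lemma gives $L_i \in D_N$ for every $i \in \omega$. By Definition~\ref{N-graph} the points $\ell_i$ are pairwise distinct, so the singletons $L_i = \{\ell_i\}$ are pairwise distinct elements of the Boolean reduct of $\mathsf{Cm}(\mathbb{F}_N)$, all of them lying in the universe $D_N$ of $\mathbf{D}_N$. Since $L = \{\ell_i \mid i \in \omega\}$ is infinite, the family $\{L_i \mid i \in \omega\}$ is an infinite subset of $D_N$, and hence $\mathbf{D}_N$ is infinite.

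There is no genuine obstacle here; the only thing to double-check is that the $\ell_i$ really are distinct, which is built into Definition~\ref{N-graph}. One could just as well argue using the $U_j$ in place of the $L_i$. It is worth noting why we bother stating this separately: it is precisely condition~(i) of Lemma~\ref{conditions}, the first of the four hypotheses we will need to verify in order to conclude that $\mathop{\textup{Var}}(\mathbf{D}_N)$ has height $3$.
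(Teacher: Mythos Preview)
Your proof is correct and matches the paper's approach: the corollary is stated without proof immediately after Lemma~\ref{singletons}, so the intended argument is precisely the observation that the infinitely many pairwise distinct singletons \(L_i\) (or \(U_j\)) furnished by that lemma already witness the infinitude of \(D_N\).
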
  

\begin{lemma} \label{embed}
Let \(N \subseteq \mathbb{E}\). Then \(\textup{\textsf{Cm}}(\mathbb{K}_2) \in
\mathop{\textup{IS}}(\mathbf{D}_N)\). 
\end{lemma}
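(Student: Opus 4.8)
The plan is to reduce the claim to exhibiting a single well-chosen element of \(D_N\). Recall that, up to isomorphism, \(\mathsf{Cm}(\mathbb{K}_2)\) is the four-element \(\KTB\)-algebra in which \(\Diamond\) fixes \(0\) and sends every other element to \(1\) (its universe being \(\{\emptyset,\{1\},\{2\},\{1,2\}\}\) with \(\Diamond X=\{1,2\}\) whenever \(X\neq\emptyset\)). Hence it suffices to find \(p\in D_N\) with \(0\neq p\neq 1\) and \(\Diamond p=\Diamond{\sim}p=1\): for such a \(p\) the set \(\{0,p,{\sim}p,1\}\) is closed under the Boolean operations, and, since \(\Diamond 0=0\), \(\Diamond 1=1\) and \(\Diamond p=\Diamond{\sim}p=1\), also under \(\Diamond\); so it is the universe of a subalgebra of \(\mathbf{D}_N\), onto which the map fixing \(0,1\) and sending the two atoms of \(\mathsf{Cm}(\mathbb{K}_2)\) to \(p\) and \({\sim}p\) is an isomorphism.

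For the element itself I would take \(p\deq A\cup C_1\cup(C_2\cup U)\). By Lemma~\ref{singletons} each of \(A\), \(C_1\) and \(C_2\cup U\) lies in \(D_N\), and \(D_N\) is closed under join, so \(p\in D_N\). In the notation of Definition~\ref{N-graph} this is \(p=\{a,c_1,c_2\}\cup U\), with complement \({\sim}p=\{b_1,b_2,b_3,d\}\cup L\); since \(a\in p\) and \(d\notin p\), \(p\) is neither \(0\) nor \(1\).

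The remaining point is to verify \(\Diamond p=1=\Diamond{\sim}p\), that is, that \(p\) and \({\sim}p\) are both dominating sets of \(\mathbb{F}_N\), and this can be read straight off Figure~\ref{fig1}. For \(p\): the vertex \(a\) is adjacent to \(b_1,b_2,b_3\) and to every \(\ell_i\) (the edge \(\{a,\ell_i\}\) occurring for all \(i\in\omega\)), \(c_1\) is adjacent to \(d\), and each \(u_i\) already lies in \(p\); so every vertex of \(W\) has a neighbour (or is) in \(p\). For \({\sim}p\): each \(u_i\) is adjacent to \(\ell_i\in{\sim}p\), the vertices \(b_1,b_2,b_3,d\in{\sim}p\) are adjacent to \(a\), and in addition \(b_1\) is adjacent to \(c_1\) and \(b_2\) to \(c_2\); so again every vertex of \(W\) has a neighbour (or is) in \({\sim}p\). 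Note that the twist edges governed by \(N\) play no part in this argument, so the whole verification, and hence the embedding, is uniform in \(N\).

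I do not expect any serious obstacle once Lemma~\ref{singletons} is available; the only real choice is which element of \(D_N\) to use, and it must be one — like \(p\) above — whose support and whose complement are \emph{both} dominating. Naive candidates such as \(D\), \(P\), or \(A\) alone each fail on one side, and the reason \(\{p,{\sim}p\}\) works is simply that it separates the ``\(a\)/\(c\)'' vertices together with the \(u_i\) from the ``\(b\)/\(d\)'' vertices together with the \(\ell_i\), each block meeting the closed neighbourhood of every vertex of \(\mathbb{F}_N\).
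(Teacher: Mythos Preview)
Your argument is correct and essentially identical to the paper's: the paper takes \(X\deq B\cup D\cup L\), which is precisely your \({\sim}p\), and observes that the subalgebra it generates is a copy of \(\mathsf{Cm}(\mathbb{K}_2)\). One tiny slip: in your verification of \(\Diamond{\sim}p=1\) you write that \(d\) is adjacent to \(a\), which is false (the only neighbour of \(d\) besides itself is \(c_1\)); this is harmless, since \(d\in{\sim}p\) already, but you should rephrase that sentence.
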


\begin{proof}
From Lemma \ref{singletons}, it follows that \(X \deq B \cup D \cup L \in D_N\). Based on Fig. \ref{k2} and Fig. \ref{fig2}, the subalgebra of \(\mathbf{D}_N\) generated by \(X\) is isomorphic to \(\mathsf{Cm}(\mathbb{K}_2)\), hence \(\mathsf{Cm}(\mathbb{K}_2) \in \mathop{\textup{IS}}(\mathbf{D}_N)\), as claimed. \end{proof}

Based on Fig \ref{fig2}, if \(N \subseteq \mathbb{E}\), then every vertex other than \(d\) is joined to \(a\) by a path of length of at most \(2\) in \(\mathbb{F}_N\), so \(\mathbf{D}_N \models\forall x \colon  x \neq 0 \to \Diamond^5 x = 1\). The following Lemma is an easy consequence of this observation and \L o\'s's Theorem. 

\begin{lemma} \label{simple}
Let \(N \subseteq \mathbb{E}\). Then every member of\/
\(\mathop{\textup{P}}_\textup{U} (\mathbf{D}_N) \) is simple. 
\end{lemma}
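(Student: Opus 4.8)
The plan is to isolate a single first-order sentence that is true in $\mathbf{D}_N$ and that forces simplicity, and then appeal to \L o\'s's Theorem. Write $P_n$ for the sentence $0 \neq 1 \mathrel{\&} \forall x\colon x \neq 0 \to \Diamond^n x = 1$, which is visibly first-order. As noted just before the statement, $\mathbf{D}_N \models \forall x\colon x\neq 0 \to \Diamond^5 x = 1$ (every vertex of $\mathbb{F}_N$ other than $d$ is within distance $2$ of $a$, and $d$ is within distance $3$ of $a$, so $\mathbb{F}_N$ has diameter at most $5$, while $\Diamond_N = R_N^{-1}$ and $R_N$ is symmetric); and $\mathbf{D}_N$ is infinite by Corollary~\ref{DN-inf}, so in particular $0 \neq 1$ holds in it. Hence $\mathbf{D}_N \models P_5$.

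The main step is the purely algebraic claim that every $\mathbf{B}\in\mathcal{B}$ with $\mathbf{B}\models P_n$, for some $n\geqslant 1$, is simple. Recall that the congruences of a modal algebra $\mathbf{B}$ are in bijection with its \emph{open filters}, i.e.\ the lattice filters $F$ of the underlying Boolean algebra such that $\Box x\in F$ whenever $x\in F$, via $\theta \mapsto 1/\theta$ (see, e.g., \cite{TTML}); so $\mathbf{B}$ is simple exactly when $\{1\}$ and $B$ are distinct and are its only open filters. Writing $\Box^k$ for the $k$-fold iterate of $\Box$, an easy induction gives $\Box^k x = \neg\Diamond^k\neg x$. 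Now suppose $\mathbf{B}\models P_n$ and let $F$ be an open filter of $\mathbf{B}$ with $F\neq\{1\}$; fix $a\in F$ with $a\neq 1$. Then $\neg a\neq 0$, so $\Diamond^n\neg a = 1$, whence $\Box^n a = \neg\Diamond^n\neg a = 0$. As $F$ is closed under $\Box$, we get $0 = \Box^n a\in F$, so $F = B$. Thus $\{1\}$ and $B$ are the only open filters of $\mathbf{B}$, and since $0\neq 1$ in $\mathbf{B}$ they are distinct; hence $\mathbf{B}$ is simple.

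To conclude, let $\mathbf{B}\in\mathop{\textup{P}}_\textup{U}(\mathbf{D}_N)$, say $\mathbf{B}$ is an ultrapower of $\mathbf{D}_N$ modulo an ultrafilter $\mathcal{U}$. Since $\mathcal{B}$ is a variety, $\mathbf{B}\in\mathcal{B}$; and since $\mathbf{D}_N\models P_5$ and $P_5$ is first-order, \L o\'s's Theorem gives $\mathbf{B}\models P_5$, so $\mathbf{B}$ is simple by the previous paragraph.

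There is no genuine obstacle here, which is why the lemma is advertised as easy: the only ingredient that is not routine first-order bookkeeping is the characterisation of simplicity of a $\KTB$-algebra via the open-filter correspondence together with the uniform bound in $P_n$, and that is standard modal-algebra theory. It is worth stressing, nonetheless, that it is precisely the \emph{uniformity} of the bound — the same $n=5$ serving $\mathbf{D}_N$ and all of its ultrapowers — that makes the argument work; in general an ultrapower of simple algebras need not be simple.
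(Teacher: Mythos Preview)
Your proof is correct and follows precisely the route the paper has in mind: the paper records the observation \(\mathbf{D}_N \models \forall x\colon x \neq 0 \to \Diamond^5 x = 1\) and then says the lemma ``is an easy consequence of this observation and \L o\'s's Theorem,'' while you simply unpack that easy consequence via the open-filter description of congruences. The only cosmetic remark is that \(\mathop{\textup{P}}_\textup{U}(\mathbf{D}_N)\) denotes ultraproducts of copies of \(\mathbf{D}_N\), which are of course ultrapowers, so your phrasing is fine.
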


\begin{lemma} \label{small}
Let \(N \subseteq \mathbb{E}\), let \(F\) be an ultrafilter over a set \(I\) and let \(\mathbf{S}\) be a subalgebra of \(\mathbf{D}_N^I/F\). If \(\mathbf{S} \models \forall x \colon x \neq 0 \to \Diamond x = 1\), then \(\mathbf{S} \cong \textup{\textsf{Cm}}(\mathbb{K}_1)\) or \(\mathbf{S} \cong \textup{\textsf{Cm}}(\mathbb{K}_2)\).
\end{lemma}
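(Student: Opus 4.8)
The plan is to isolate one universal first-order sentence that $\mathbf{D}_N$ satisfies, lift it to the ultrapower by {\L}o\'s's Theorem, restrict it back down to $\mathbf{S}$ (legitimate since the sentence is universal), and then combine it with the totality hypothesis to squeeze $\mathbf{S}$ down to at most four elements. The combinatorial heart of the matter is the vertex $d$: by Definition~\ref{N-graph}, the only pair listed containing $d$ is $\{c_1,d\}$, so in $\mathbb{F}_N$ the vertex $d$ has exactly one $R_N$-neighbour besides itself, namely $c_1$. Consequently, every $X \subseteq W$ with $\Diamond_N X = W$ must meet the two-element set $\{c_1,d\}$, and hence there cannot be three pairwise disjoint nonempty such sets. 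So $\mathsf{Cm}(\mathbb{F}_N)$, and a fortiori its subalgebra $\mathbf{D}_N$, satisfies the universal sentence
\[
\sigma\colon\quad \forall x\,\forall y\,\forall z\,\bigl[\,(x\wedge y = 0 \mathrel{\&} x\wedge z = 0 \mathrel{\&} y\wedge z = 0 \mathrel{\&} \Diamond x = 1 \mathrel{\&} \Diamond y = 1 \mathrel{\&} \Diamond z = 1)\ \to\ (x = 0 \vee y = 0 \vee z = 0)\,\bigr].
\]

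Next, by {\L}o\'s's Theorem $\mathbf{D}_N^I/F \models \sigma$, and since $\sigma$ is universal and $\mathbf{S}$ is a subalgebra of $\mathbf{D}_N^I/F$, also $\mathbf{S} \models \sigma$. Together with the hypothesis $\mathbf{S} \models \forall x\,(x \neq 0 \to \Diamond x = 1)$, this says precisely that $\mathbf{S}$ contains no three pairwise disjoint nonzero elements.

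It then remains to observe that a Boolean algebra without three pairwise disjoint nonzero elements has at most four elements: otherwise it would contain a strict chain $0 < x < y < 1$, and $x$, $y\wedge\neg x$, $\neg y$ would be three pairwise disjoint nonzero elements. So the Boolean reduct of $\mathbf{S}$ is the two-element or the four-element Boolean algebra, and in each case the totality hypothesis determines $\Diamond$ uniquely. In the two-element case, $\Diamond 0 = 0$ and $\Diamond 1 = 1$ give $\mathbf{S} \cong \mathsf{Cm}(\mathbb{K}_1)$; in the four-element case, with atoms $p$ and $\neg p$, we get $\Diamond 0 = 0$ and $\Diamond p = \Diamond\neg p = \Diamond 1 = 1$, which is exactly the complex algebra of the total relation on two points, so $\mathbf{S} \cong \mathsf{Cm}(\mathbb{K}_2)$.

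The only non-mechanical step is spotting that the pendant vertex $d$ delivers a constraint that is (i) genuinely violated by $\mathsf{Cm}(\mathbb{K}_j)$ for every $j \geq 3$ yet valid in $\mathbf{D}_N$, and (ii) expressible by a sentence using no constants beyond $0$ and $1$, so that it passes both through the ultrapower and down to the arbitrary subalgebra $\mathbf{S}$; everything after that is routine Boolean bookkeeping.
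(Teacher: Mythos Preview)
Your argument is correct and rests on the same combinatorial fact the paper exploits: the pendant vertex \(d\) has \(R_N\)-neighbourhood \(\{c_1,d\}\), so any subset \(X\subseteq W\) with \(\Diamond_N X = W\) must meet \(\{c_1,d\}\), and hence at most two such sets can be pairwise disjoint. Where you diverge from the paper is in the packaging. The paper argues directly inside the ultrapower: it supposes \(\mathbf{S}\) contains two elements \(X/F,\,Y/F\notin\{0,1\}\) with \(X/F\neq Y/F\) and \(X/F\neq\neg Y/F\), normalises so that \(\{i\mid d\in X(i)\}\) and \(\{i\mid d\in Y(i)\}\) lie in \(F\), and then shows by explicit coordinate-wise tracking of \(d\) and \(c_1\) that \(\Diamond(X/F\wedge\neg Y/F)\neq 1\) and \(\Diamond(\neg X/F\wedge Y/F)\neq 1\), contradicting totality. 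You instead compress the entire ultrafilter computation into the observation that your sentence \(\sigma\) is universal first-order, so {\L}o\'s plus downward preservation to subalgebras transfers it to \(\mathbf{S}\) in one stroke; the remaining work is then pure Boolean-algebra bookkeeping.

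What each approach buys: the paper's hands-on calculation is self-contained and does not invoke preservation of universal sentences, but it repeats by hand exactly what {\L}o\'s already guarantees. Your route is shorter and makes the logical structure transparent---the only graph-theoretic input is the single pigeonhole fact about \(\{c_1,d\}\), and everything else is model theory. One small point you leave implicit: you pass from \(|S|\leqslant 4\) directly to the two- or four-element Boolean algebra, silently excluding the trivial one-element case; this is harmless since \(\mathbf{D}_N\) (and hence every ultrapower and every subalgebra thereof) is nontrivial, but it is worth a word.
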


\begin{proof}
Let \(S\) be the universe of \(\mathbf{S}\) and define a map \( \bar{X} \colon I \to D_N\) by \(i \mapsto X\), for each \(X \in D_N\). Suppose, for a contradiction, that there exist \(X,Y \in D_N^I\) with \(X/F, Y/F \in S \setminus \{\bar{\varnothing}/F, \bar{W}/F\}\), \(X/F\neq Y/F\) and \(X /F \neq \neg Y/F\). Clearly, we must have \(\{ i \in I \mid d \in X(i)\} \cup \{i \in I \mid d\in {\sim} X(i)\}  = I \in F\), which implies that \(\{i \in I \mid d \in X(i)\}\in F\) or \(\{i \in I \mid d \in {\sim} X(i)\}\in F\). Similarly,  \(\{i \in I \mid d \in Y(i)\}\in F\) or \(\{i \in I \mid d \in {\sim} Y(i)\}\in F\). Without loss of generality, we can assume that both \(\{i \in I \mid d \in X(i)\}\in F\) and \(\{i \in I \mid d \in Y(i)\}\in F\), since we can interchange \(X\) with \(\neg X\) and \(Y\) with \(\neg Y\) (if necessary).

Clearly, \(\neg X/F \neq \bar{\varnothing}/F\) and \(\neg Y/F \neq \bar{\varnothing}/F\), so \(\Diamond \neg X/F = \bar{W}/F= \Diamond \neg Y/F\), since \(\mathbf{S} \models  \forall x \colon x \neq 0 \to \Diamond x = 1\). We have \(\{i \in I \mid d \in X(i)\}\in F\) and \(\{i \in I \mid d \in Y(i)\}\in F\), so \(\{i \in I \mid d \notin {\sim} X(i)\}\in F\) and \(\{i \in I \mid d \notin {\sim} Y(i)\}\in F\). By Fig. \ref{fig1},  \(\{i \in I \mid c_1 \in {\sim} X(i)\}\in F\) and \(\{i \in I \mid c_1 \in {\sim} Y(i)\}\in F\). Thus, \(\{ i \in I \mid c_1, d \notin X(i) \cap {\sim} Y(i)\} \in F\) and \(\{ i \in I \mid c_1, d \notin {\sim} X(i) \cap Y(i)\} \in F\). By Fig. \ref{fig1}, \(\{ i \in I \mid d \notin \Diamond(X \wedge \neg Y)(i)\} \in F\) and \(\{i \in I \mid d\notin \Diamond(\neg X \wedge Y)(i)\} \in F\), hence \(\Diamond(X/F \wedge \neg Y/F) \neq \bar{W}/F\) and \(\Diamond(\neg X/F \wedge Y/F) \neq \bar{W}/F\). Since \(X/F\neq Y/F\) and \(X /F \neq \neg Y/F\), it follows that \( X/F\wedge \neg Y/F \neq \bar{\varnothing}/F\) or \(\neg X/F \wedge Y/F \neq \varnothing/F\), so this contradicts the fact that \(\mathbf{S} \models \forall x \colon x \neq 0 \to \Diamond x = 1\). Thus, we must have \(|S| \leqslant 4\). Since \(\mathbf{S} \models \forall x \colon x \neq 0 \to \Diamond x = 1\) and \(\mathbf{D}_N\) has no trivial subalgebras, this implies that \(\mathbf{S} \cong \textsf{Cm}(\mathbb{K}_1)\) or \(\mathbf{S} \cong \textsf{Cm}(\mathbb{K}_2)\), as claimed. \end{proof}

\begin{lemma}\label{clo}
Let \(N \subseteq \mathbb{E}\), let \(\gamma\) be the natural closure operator of \(\mathbf{D}_N\) and let \(X\) be a \(\gamma\)-closed element of \(D_N\) with \(\Diamond X \neq W\) and \(\Diamond^2 X = W\). Then \(\Diamond {\sim} X \neq W\).
\end{lemma}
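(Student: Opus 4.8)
The plan is to move entirely to the graph \(\mathbb{F}_N\) and argue about the seven ``bottom'' vertices \(a, b_1, b_2, b_3, c_1, c_2, d\); membership of \(X\) in \(D_N\) will in fact play no role, only that \(X\) is a \(\gamma\)-closed subset of \(W\). Since \(R_N\) is reflexive and symmetric, \(\Diamond Z\) is just the closed neighbourhood \(N[Z]\) of \(Z\), and \(\Box Z = {\sim}N[{\sim}Z]\). Suppose, for contradiction, that \(\Diamond{\sim}X = W\). Put \(Y \deq {\sim}\Diamond X\) and \(M \deq W \setminus (X \cup Y)\), so \(\{X, M, Y\}\) partitions \(W\). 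Unwinding the hypotheses I would record: \(X\) and \(Y\) are nonempty (otherwise \(\Diamond^2 X\), resp.\ \(\Diamond X\), could not be \(W\)); \(N[X] = {\sim}Y = X \cup M\) by definition of \(Y\); and, since \(X = \Box\Diamond X\) is \(\gamma\)-closed, \({\sim}X = N[Y]\), i.e.\ \(N[Y] = M \cup Y\). From these, the two hypotheses, and the contradiction assumption I then extract the adjacency facts: (b) there is no edge between \(X\) and \(Y\); (c) every vertex of \(M\) has a neighbour in \(X\) and a neighbour in \(Y\); (d) every vertex of \(Y\) has a neighbour in \(M\) (from \(\Diamond^2 X = W\)); (e) every vertex of \(X\) has a neighbour in \(M\) (from \(\Diamond{\sim}X = W\)). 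The structural point to notice is that (b)--(e) together with ``\(X, Y \neq \varnothing\)'' form a condition symmetric in \(X\) and \(Y\), with \(M\) fixed.

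For the combinatorial core I would use the part of \(\mathbb{F}_N\) that is the same for every \(N\): \(N[d] = \{d, c_1\}\), \(N[c_2] = \{c_2, b_2\}\), \(N[b_3] = \{a, b_3\}\), \(N[c_1] = \{b_1, c_1, d\}\), \(N[b_2] = \{a, b_2, c_2\}\), and \(a\) is adjacent to each of \(b_1, b_2, b_3\). Each of \(d, c_2, b_3\) has a single neighbour other than itself, so by (c) none of them lies in \(M\); hence each lies in \(X\) or \(Y\), and by the \(X \leftrightarrow Y\) symmetry I may assume \(d \in X\). Then (e) forces \(c_1 \in M\); applying (c) to \(c_1\) (whose neighbours are \(b_1\) and \(d\), with \(d \in X\)) forces \(b_1 \in Y\); then (b), applied to the edge \(\{a, b_1\}\), gives \(a \notin X\). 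Running the same reasoning through \(c_2\): by (d)/(e) its unique neighbour \(b_2\) lies in \(M\), so by (c) the vertex \(b_2\) has a neighbour in each of \(X\) and \(Y\) among \(\{a, c_2\}\); since \(c_2\) lies in exactly one of \(X, Y\), the vertex \(a\) lies in the other, and \(a \notin X\) then forces \(c_2 \in X\) and \(a \in Y\). Finally \(b_3 \in X \cup Y\) has \(a \in Y\) as its only neighbour, so (b) rules out \(b_3 \in X\), leaving \(b_3 \in Y\); but then \(b_3\) has no neighbour in \(M\), contradicting (d). This contradiction yields \(\Diamond{\sim}X \neq W\).

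The only genuine work I expect is the passage from the algebraic hypotheses to the adjacency facts (b)--(e): the identity \({\sim}X = N[Y]\) coming from \(\gamma\)-closedness is exactly what makes (c) and (e) go through, and it needs to be set up with a little care. The subsequent tracing of forced memberships through the seven-vertex gadget is routine, the one thing to watch being the \(X \leftrightarrow Y\) symmetry that licenses ``assume \(d \in X\)''. I do not anticipate a real obstacle, only the usual risk of bookkeeping slips when unwinding \(\Box\Diamond X = X\).
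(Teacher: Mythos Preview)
Your proof is correct. The setup of the tripartition \(\{X,M,Y\}\) and the derivation of (b)--(e) from \(\gamma\)-closedness, \(\Diamond^2 X = W\), and the contradiction hypothesis \(\Diamond{\sim}X = W\) are all sound; the symmetry observation is legitimate because you use only (b)--(e) and \(X,Y\neq\varnothing\) in the combinatorial trace, and those are indeed symmetric in \(X\) and \(Y\). The trace through \(d \to c_1 \to b_1 \to a\) and \(c_2 \to b_2 \to a\), collapsing at \(b_3\), is clean.

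The paper's proof takes a different and shorter route. It argues directly (no contradiction hypothesis), casing on whether \(a\in X\). If \(a\in X\), then \(a,b_3\in\Diamond X\), and since \(X\) is \(\gamma\)-closed one gets \(b_3\in X\), whence \(b_3\notin\Diamond{\sim}X\). If \(a\notin X\), one first shows \(a\in\Diamond X\) (else \(b_3\notin\Diamond X\), contradicting \(\Diamond^2 X = W\)), then uses \(c_2\in\Diamond^2 X\) and \(a\notin X\) to force \(b_2\in X\) or \(c_2\in X\), giving \(a,b_2,c_2\in\Diamond X\) and hence \(c_2\notin\Diamond{\sim}X\). The paper never touches the \(d,c_1,b_1\) branch at all. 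Your approach buys a clean structural picture (the \(X/M/Y\) partition with its symmetric adjacency constraints is a reusable device), at the cost of a longer argument that walks through all seven gadget vertices; the paper's approach is quicker and exhibits an explicit witness outside \(\Diamond{\sim}X\), but is more ad hoc.
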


\begin{proof}
Firstly, assume that \(a\in X\). Based on Fig. \ref{fig1}, we have \(a, b_3 \in \Diamond X\), hence \(a, b_3 \notin {\sim}\Diamond X\). So, by Lemma~\ref{natclo}(i), \(b_3 \notin \Diamond{\sim}\Diamond X = {\sim}X\), hence \(a,b_3 \in X\). By Fig. \ref{fig1}, \(b_3 \notin \Diamond{\sim} X\), which implies that \(\Diamond {\sim} X \neq W\) if \(a \in X\).

Now, assume that \(a \notin X\). We claim that \(a\in \Diamond X\); suppose that \(a \notin \Diamond X\). By Fig. \ref{fig1}, we have \(b_3 \notin X\), hence \(a,b_3\notin X\) and \(a \notin \Diamond X\). Thus, \(b_3 \notin \Diamond X\), which contradicts the fact that \(\Diamond^2 X = W\). It follows that \(a\in \Diamond X\), as claimed. By Fig. \ref{fig1}, we must have \(b_2\in X\) or \(c_2 \in X\), as \(a \notin X\) and \(\Diamond^2 X = W\). Hence, \(a,b_2,c_2 \in \Diamond X\), so by Lemma~\ref{natclo}(i), we have \(c_2 \notin \Diamond^2 {\sim}\Diamond X  = \Diamond{\sim} X\). From this, it follows that \(\Diamond {\sim} X \neq W\) if \(a \notin X\), so \(\Diamond{\sim} X \neq W\), as claimed. 
\end{proof}

\begin{lemma}\label{subD}
Let \(N \subseteq \mathbb{E}\), let \(F\) be an ultrafilter over a set \(I\), let \(\mathbf{S}\) be a subalgebra of \(\mathbf{D}_N^I /F\), let \(S\) be the universe of \(\mathbf{S}\), let \(\bar{X} \colon I \to D_N^I\) be def\-ined by \(i \mapsto X\), for each \(X \in D_N\), let \(\gamma\) be the natural closure operator of \(\mathbf{S}\) and let \(X \in D_N^I\) with \(X/F \in S\) and \(X/F \neq \bar{\varnothing}/F\).
\begin{enumerate}
\item
If \(\{ i \in I \mid P \cap X(i) = \varnothing\} \in F\), then \(\bar{D}/F \in S\).
\item
If \( \{i \in I \mid P \subseteq  \Diamond X(i) \} \in F\) and \(\Diamond X/F \neq \bar{W}/F\), then \(\bar{D} \in S\).
\item
If \(\{i \in I \mid P \subseteq \Diamond{\sim} X(i)\} \in F\), \(\Diamond X/F \neq \bar{W}/F\), \(\Diamond^2 X/F = \bar{W}/F\) and \(X/F\) is \(\gamma\)-closed, then \(\bar{D}/F \in S\).
\end{enumerate}
\end{lemma}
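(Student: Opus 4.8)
The plan is to establish (i) by working directly in the graph \(\mathbb{F}_N\), and then to deduce (ii) and (iii) from (i) almost for free, by complementation and an appeal to Lemma~\ref{clo}. Two facts about \(\mathbb{F}_N\) do all the combinatorial work, and both hold for every \(N \subseteq \mathbb{E}\), because the \(N\)-dependent edges lie among the \(\ell_i\) and \(u_j\) and never touch the tail through \(b_1\), \(c_1\), \(d\) or the edges \(\{a,\ell_i\}\), \(\{\ell_i,u_i\}\). First, reading off Fig.~\ref{fig1}, \(\Diamond P = A \cup P\); hence if \(Z \subseteq W\) with \(Z \cap (A \cup P) = \varnothing\), then \(\Diamond Z \cap P = \varnothing\). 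Second, if \(a \in Y \subseteq W\) and \(Y \cap P = \varnothing\), then \(\Diamond^2 Y = {\sim}D\): from \(a\) one reaches all of \(B \cup L\) in one step and then all of \(C \cup U\) in a second, so \(\Diamond^2 Y \supseteq A\cup B\cup C\cup L\cup U = {\sim}D\), while \(d \in \Diamond^2 Y\) would force \(Y \cap P \neq \varnothing\); thus for such \(Y\) we have \(\neg\Diamond^2 Y = D\).

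For (i), let \(X/F \in S\) with \(X/F \neq \bar{\varnothing}/F\) and \(\{i \in I : X(i)\cap P = \varnothing\} \in F\); intersecting, the set of \(i\) with \(X(i)\neq\varnothing\) and \(X(i)\cap P = \varnothing\) lies in \(F\). On that set the least \(k(i) \in \omega\) with \(a \in \Diamond^{k(i)}X(i)\) is well defined and at most \(2\): if \(a \notin \Diamond X(i)\) then \(X(i)\cap\Diamond A = \varnothing\), so (with \(X(i)\cap P = \varnothing\)) \(X(i) \subseteq C_2 \cup U\), and then \(\Diamond X(i)\) meets \(B_2 \cup L \subseteq \Diamond A\) since \(X(i)\neq\varnothing\), forcing \(a \in \Diamond^2 X(i)\). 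As \(k(i)\) takes finitely many values, fix \(k^\ast \in \{0,1,2\}\) with \(J := \{i\in I : k(i) = k^\ast\} \in F\). For \(i \in J\), an induction on \(j \leqslant k^\ast\) using the first fact (at each stage \(a \notin \Diamond^{j}X(i)\) by minimality, so \(\Diamond^{j}X(i)\cap(A\cup P) = \varnothing\)) gives \(\Diamond^{k^\ast}X(i)\cap P = \varnothing\), while \(a \in \Diamond^{k^\ast}X(i)\) by definition of \(k^\ast\); so the second fact, applied with \(Y = \Diamond^{k^\ast}X(i)\), yields \(\neg\Diamond^{k^\ast+2}X(i) = D\). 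Since this holds for every \(i\in J\in F\), we get \(\neg\Diamond^{k^\ast+2}X/F = \bar{D}/F\), and the left-hand side lies in \(S\) because \(X/F \in S\) and \(\mathbf{S}\) is a subalgebra; hence \(\bar{D}/F \in S\).

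Part (ii) is now immediate: apply (i) to the sequence \(i \mapsto {\sim}\Diamond X(i)\), whose class \(\neg\Diamond X/F\) lies in \(S\), is nonzero since \(\Diamond X/F \neq \bar{W}/F\), and satisfies \(\{i\in I : P \cap {\sim}\Diamond X(i) = \varnothing\} = \{i\in I : P \subseteq \Diamond X(i)\} \in F\). For (iii), I would first check that \(\Diamond \neg X/F \neq \bar{W}/F\): reading the hypotheses ``\(X/F\) is \(\gamma\)-closed'', \(\Diamond X/F \neq \bar{W}/F\) and \(\Diamond^2 X/F = \bar{W}/F\) coordinatewise through \(F\), there is an \(F\)-large set of \(i\) on which \(X(i)\) is a \(\gamma\)-closed element of \(D_N\) with \(\Diamond X(i) \neq W\) and \(\Diamond^2 X(i) = W\), and Lemma~\ref{clo} gives \(\Diamond{\sim}X(i) \neq W\) there. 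Since moreover \(\neg X/F \in S\) is nonzero (otherwise \(X/F = \bar{W}/F\), whence \(\Diamond X/F = \bar{W}/F\)) and \(\{i\in I : P \subseteq \Diamond{\sim}X(i)\} \in F\) by hypothesis, part (ii) applied to the sequence \(i \mapsto {\sim}X(i)\) delivers \(\bar{D}/F \in S\).

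The only genuine work is part (i), and inside it the case \(k^\ast = 2\), where \(X\) lives on the \(C_2 \cup U\) side and \(\Diamond\) has to be followed along the zigzag; but the first fact reduces this to checking that nothing leaks into \(P\), so it is bookkeeping against Fig.~\ref{fig1} rather than a real obstacle. It is worth noting that parts (ii) and (iii) use the ultrapower only through such coordinatewise readings of the hypotheses and of Lemma~\ref{clo}, introducing no new construction.
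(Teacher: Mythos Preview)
Your proof is correct and follows essentially the same route as the paper's. The paper compresses your part~(i) into a single sentence---``if \(Y \in D_N \setminus \{\varnothing\}\) with \(Y \cap P = \varnothing\), then \(\Diamond^2 Y = {\sim}D\), \(\Diamond^3 Y = {\sim}D\) or \(\Diamond^4 Y = {\sim}D\)''---which your two facts and the distance-to-\(a\) argument simply unpack; parts~(ii) and~(iii) are handled identically in both, by complementation and an appeal to Lemma~\ref{clo} via \L o\'s.
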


\begin{proof}
Assume that \(\{ i \in I \mid P  \cap X(i) = \varnothing\} \in F\). By Fig. \ref{fig1}, if \(Y \in D_N \setminus \{\varnothing\}\) with \(Y \cap P = \varnothing\), then \(\Diamond^2 Y = {\sim}D\), \(\Diamond^3 Y = {\sim}D\) or \(\Diamond^4 Y = {\sim}D\). Thus, \(\{i \in I \mid D = {\sim}\Diamond^2X(i)\} \cup \{ i \in I \mid D = {\sim}\Diamond^3X(i)\} \cup \{ i \in I \mid D= {\sim}\Diamond^4X(i)\} = I \in F\), so we must have \(\{i \in I \mid D = {\sim}\Diamond^2X(i)\} \in F\), \(\{i \in I \mid D = {\sim}\Diamond^3X(i)\} \in F\) or \(\{i \in I \mid D = {\sim}\Diamond^4 X(i)\} \in F\). Clearly, this implies that \(\neg \Diamond^2  X/F = \bar{D}/F\), \(\neg \Diamond^3 X/F = \bar{D}/F\) or \(\neg \Diamond ^4 X/F = \bar{D}/F\), so (i) holds.

Now, to prove (ii), assume that we have \( \{i \in I \mid P \subseteq  \Diamond X (i) \} \in F\) and \(\Diamond X/F \neq \bar{W}/F\). Then \(\{ i \in I \mid P \cap {\sim}\Diamond X(i) = \varnothing\}  \in F\), \(\neg \Diamond X/F \neq \bar{\varnothing}/F\) and \(\neg \Diamond X/F \in S\). By the previous result, \(\bar{D}/F \in S\), so (ii) holds.

To prove (iii), assume that \(\{i \in I \mid P \subseteq \Diamond{\sim} X(i)\} \in F\), \(\Diamond X/F \neq \bar{W}/F\), \(\Diamond^2 X/F = \bar{W}/F\) and \(X/F\) is \(\gamma\)-closed. From Lemma \ref{clo} and \L o\'s's Theorem, it follows that \(\Diamond \neg X/F \neq \bar{W}/F\). So, based on the previous result, \(\bar{D}/F \in S\). Thus, the three required results hold.
\end{proof}

\begin{lemma}\label{big}
Let \(N \subseteq \mathbb{E}\), let \(F\) be an ultraf\-ilter over a set \(I\) and let \(\mathbf{S}\) be a subalgebra of \(\mathbf{D}_N^I/F\). If \(\mathbf{S} \models \exists x \colon x \neq 0 \mathrel{\&} \Diamond x \neq 1\), then \(\mathbf{D}_N \in \mathop{\textup{IS}} (\mathbf{S})\).
\end{lemma}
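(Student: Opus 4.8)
The plan is to pull a $\gamma$-closed witness out of $\mathbf{S}$ with Lemma~\ref{nicegen} and then feed it to Lemma~\ref{subD} to obtain $\bar{D}/F\in S$, from which the embedding follows immediately. The sentence $\forall x\colon x\neq 0\to\Diamond^5 x=1$ is universal, so it descends from $\mathbf{D}_N$ (where it was observed to hold just before Lemma~\ref{simple}) to the ultrapower $\mathbf{D}_N^I/F$ by \L o\'s's Theorem, and thence to its subalgebra $\mathbf{S}$. Combined with the hypothesis $\mathbf{S}\models\exists x\colon x\neq 0\mathrel{\&}\Diamond x\neq 1$, Lemma~\ref{nicegen} (applied with $n=5$) yields a $\gamma$-closed $y\in S$ with $\Diamond y\neq 1$ and $\Diamond^2 y=1$. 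Fixing $X\in D_N^I$ with $X/F=y$, this says $X/F\neq\bar{\varnothing}/F$, $\Diamond X/F\neq\bar{W}/F$, $\Diamond^2 X/F=\bar{W}/F$, and $X/F$ is $\gamma$-closed; moreover, by \L o\'s's Theorem the set $J\deq\{i\in I\mid X(i)\text{ is }\gamma\text{-closed and }\Diamond^2 X(i)=W\}$ belongs to $F$.

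The crux --- and the only step I expect to require genuine care --- is the combinatorial claim that for every $i\in J$ we have $P\subseteq\Diamond X(i)$ or $P\subseteq\Diamond{\sim}X(i)$. Fix such an $i$ and put $Y=X(i)$. In $\mathbb{F}_N$ the only vertices at distance at most $2$ from $d$ are those of $\{b_1,c_1,d\}=P$, so $d\in\Diamond^2 Y=W$ forces $P\cap Y\neq\varnothing$; and since $Y$ is $\gamma$-closed, $d\in Y$ holds precisely when $Y$ meets both $\{b_1,c_1,d\}$ and $\{c_1,d\}$, so in particular $c_1\in Y$ implies $d\in Y$. Now split on $P\cap Y$. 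If $d\in Y$, then $c_1,d\in\Diamond Y$ automatically, and either $b_1\in\Diamond Y$ too, giving $P\subseteq\Diamond Y$, or $b_1\notin\Diamond Y$, which means $Y$ misses the neighbourhood $\{a,b_1,c_1\}$ of $b_1$, so that $a,b_1,c_1\in{\sim}Y$ and hence $b_1,c_1,d\in\Diamond{\sim}Y$ (as $d$ is adjacent to $c_1$). Otherwise $d\notin Y$, which forces $c_1\notin Y$ and hence $b_1\in Y$; then $c_1,d\in{\sim}Y$ and, $b_1$ being adjacent to $c_1$, again $P\subseteq\Diamond{\sim}Y$. This proves the claim, the role of $\gamma$-closedness being simply to pin down when $d\in Y$.

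By the claim, $J\subseteq\{i\in I\mid P\subseteq\Diamond X(i)\}\cup\{i\in I\mid P\subseteq\Diamond{\sim}X(i)\}$, so one of these two sets lies in $F$. In the first case I apply Lemma~\ref{subD}(ii) to $X$ (the remaining hypotheses, $X/F\neq\bar{\varnothing}/F$ and $\Diamond X/F\neq\bar{W}/F$, are already in hand); in the second case I apply Lemma~\ref{subD}(iii) to $X$ (now also using $\Diamond^2 X/F=\bar{W}/F$ and that $X/F$ is $\gamma$-closed). Either way $\bar{D}/F\in S$. To conclude, the diagonal map $\mathbf{D}_N\to\mathbf{D}_N^I/F$, $Z\mapsto\bar{Z}/F$, is an embedding, and since $\mathbf{D}_N$ is generated by $D$, its image coincides with the subalgebra generated by $\bar{D}/F$; as $\bar{D}/F\in S$ this subalgebra lies inside $\mathbf{S}$, whence $\mathbf{D}_N\in\mathop{\textup{IS}}(\mathbf{S})$, as required.
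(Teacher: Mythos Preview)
Your proof is correct and follows the same overall strategy as the paper: invoke Lemma~\ref{nicegen} (using the universal sentence \(\forall x\colon x\neq 0\to\Diamond^5 x=1\), which transfers to \(\mathbf{S}\) via \L o\'s), then feed the resulting \(\gamma\)-closed witness to Lemma~\ref{subD} after splitting on whether \(P\subseteq\Diamond X(i)\) or \(P\subseteq\Diamond{\sim}X(i)\) on an \(F\)-large set, and conclude via the diagonal embedding.

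The one place you work harder than the paper is the combinatorial claim. You restrict to indices where \(X(i)\) is \(\gamma\)-closed with \(\Diamond^2 X(i)=W\) and run a case analysis on \(P\cap X(i)\), using \(\gamma\)-closedness to propagate \(c_1\in Y\Rightarrow d\in Y\). The paper instead observes that the dichotomy holds for \emph{every} \(Y\subseteq W\), with a one-line proof: the closed neighbourhood of \(c_1\) in \(\mathbb{F}_N\) is exactly \(P=\{b_1,c_1,d\}\), so \(c_1\in Y\) already gives \(P\subseteq\Diamond Y\), and symmetrically \(c_1\in{\sim}Y\) gives \(P\subseteq\Diamond{\sim}Y\). (The paper writes \(c_2\) here, which appears to be a typo for \(c_1\).) This makes your restriction to the set \(J\) and the case analysis on \(P\cap Y\) unnecessary; the union is all of \(I\), not merely \(F\)-large. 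Your argument is still sound, just longer.
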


\begin{proof}
Let \(S\) be the universe of \(\mathbf{S}\), let \(\gamma\) be the natural closure operator of \(\mathbf{S}\) and define a map \( \bar{X} \colon I \to D_N\) by \(i \mapsto X\), for each \(X \in D_N\). Since \(D\) generates \(\mathbf{D}_N\) and the natural diagonal map embeds \(\mathbf{D}_N\) into \(\mathbf{D}_N^I/F\), it will be enough to show that \(\bar{D}/F \in S\).

By Lemma \ref{nicegen}, there is some \(X \in D_N^I\) such that \(X/F \in S\), \(\Diamond X/F \neq \bar{W}/F\), \(\Diamond^2 X/F = \bar{W}/F\) and \(X/F\) is \(\gamma\)-closed, as \(\mathbf{S} \models \exists x \colon x \neq 0 \mathrel{\&} \Diamond x \neq 1\) and \(\mathbf{S} \models \forall x\colon  x \neq 0 \to \Diamond^5 x = 1\). If \(Y \subseteq W\), we either have \(c_2 \in Y\) or \(c_2 \in {\sim} Y\), so by Fig. \ref{fig1}, we must have \( P \subseteq \Diamond Y\) or \(P \subseteq \Diamond{\sim} Y\) if \(Y \subseteq W\). From this, it follows that \(\{i \in I \mid P  \subseteq \Diamond X(i)\} \cup \{i \in I \mid P  \subseteq \Diamond{\sim} X(i)\}= I \in F\), so \(\{i \in I \mid P \subseteq \Diamond X(i)\} \in F\) or \(\{i \in I \mid P \subseteq \Diamond{\sim} X(i)\} \in F\). By Lemma \ref{subD}, \(\bar{D}/F \in S\) and we are done.
\end{proof}

\begin{lemma}
Let \(N \subseteq \mathbb{E}\). Then \(\mathop{\textup{Var}} (\mathbf{D}_N)\) is
of height 3.
\end{lemma}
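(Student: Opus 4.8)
The plan is to verify the four hypotheses of Lemma~\ref{conditions} for \(\mathbf{A} = \mathbf{D}_N\); almost all of the work has already been done. Condition~(i), that \(\mathbf{D}_N\) is infinite, is exactly Corollary~\ref{DN-inf}. Condition~(ii), that \(\mathsf{Cm}(\mathbb{K}_2) \in \mathop{\textup{IS}}(\mathbf{D}_N)\), is exactly Lemma~\ref{embed}. Condition~(iii), that every member of \(\mathop{\textup{P}}_\textup{U}(\mathbf{D}_N)\) is simple, is exactly Lemma~\ref{simple}.

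The only thing that remains is condition~(iv). So I would take an arbitrary \(\mathbf{B} \in \mathop{\textup{ISP}}_\textup{U}(\mathbf{D}_N)\); by definition there is an ultrafilter \(F\) over some index set \(I\) and a subalgebra \(\mathbf{S}\) of \(\mathbf{D}_N^I/F\) with \(\mathbf{B} \cong \mathbf{S}\), so it suffices to show that \(\mathbf{S}\) is isomorphic to \(\mathsf{Cm}(\mathbb{K}_1)\), isomorphic to \(\mathsf{Cm}(\mathbb{K}_2)\), or contains \(\mathbf{D}_N\) as a subalgebra. Here I would split into the two exhaustive cases determined by whether or not \(\mathbf{S} \models \forall x \colon x \neq 0 \to \Diamond x = 1\). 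If this sentence holds in \(\mathbf{S}\), then Lemma~\ref{small} gives \(\mathbf{S} \cong \mathsf{Cm}(\mathbb{K}_1)\) or \(\mathbf{S} \cong \mathsf{Cm}(\mathbb{K}_2)\). If it fails, then \(\mathbf{S} \models \exists x \colon x \neq 0 \mathrel{\&} \Diamond x \neq 1\), and Lemma~\ref{big} gives \(\mathbf{D}_N \in \mathop{\textup{IS}}(\mathbf{S})\). Either way condition~(iv) is satisfied, and transferring back along the isomorphism \(\mathbf{B} \cong \mathbf{S}\) gives the corresponding conclusion for \(\mathbf{B}\).

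With all four conditions in hand, Lemma~\ref{conditions} immediately yields that \(\mathop{\textup{Var}}(\mathbf{D}_N)\) has height~3. There is no real obstacle left at this stage: the genuine difficulties, namely controlling the subalgebras of ultrapowers of \(\mathbf{D}_N\), were already dispatched in Lemmas~\ref{small} and~\ref{big}, and the present lemma is just the bookkeeping step that packages them through Lemma~\ref{conditions}. The only point demanding any care is that the case split on the first-order sentence \(\forall x \colon x \neq 0 \to \Diamond x = 1\) is genuinely exhaustive for \(\mathbf{S}\), which it is, since any first-order sentence either holds in \(\mathbf{S}\) or its negation does.
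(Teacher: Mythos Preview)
Your proposal is correct and matches the paper's proof essentially line for line: both verify conditions (i)--(iv) of Lemma~\ref{conditions} by quoting Corollary~\ref{DN-inf}, Lemma~\ref{embed}, Lemma~\ref{simple}, and the pair Lemmas~\ref{small} and~\ref{big}, respectively. The only difference is that you spell out the case split on \(\forall x \colon x \neq 0 \to \Diamond x = 1\) explicitly, whereas the paper leaves that dichotomy implicit in its one-line invocation of Lemmas~\ref{small} and~\ref{big}.
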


\begin{proof}
By Corollary~\ref{DN-inf} and Lemma~\ref{embed}, \(\mathbf{D}_N\) is infinite and \(\mathsf{Cm}(\mathbb{K}_2) \in \textup{IS}(\textbf{D}_N)\). By Lemma~\ref{simple}, each element of \(\textup{P}_\textrm{U}(\textbf{D}_N)\) is simple. By Lemmas~\ref{small} and \ref{big}, we must have \(\mathbf{B} \cong \textsf{Cm}(\mathbb{K}_1)\), \(\mathbf{B} \cong \textsf{Cm}(\mathbb{K}_2)\) or \(\mathbf{D}_N \in \textup{IS}(\mathbf{B})\) if \(\bf{B} \in \textup{ISP}_\textrm{U}(\textbf{D}_N)\). So, by Lemma~\ref{conditions}, \(\textup{Var}(\mathbf{D}_N)\) has height 3, as claimed. \end{proof}

Now it remains to show that for distinct  \(N,M\subseteq\mathbb{E}\), the
varieties \(\mathop{\textup{Var}} (\mathbf{D}_N)\) and
\(\mathop{\textup{Var}} (\mathbf{D}_M)\) are distinct. 

\begin{lemma}\label{ddd}
Let \(N\subseteq \mathbb{E}\) and let \(X \in D_N\setminus \{\varnothing\}\) with \(\Diamond_N^4 X \neq W\). Then \(X = D\) or \(\Diamond_N^4 X = {\sim} D\).
\end{lemma}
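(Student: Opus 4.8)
The plan is to treat this as a pure distance estimate in the graph \(\mathbb{F}_N\). Recall that the elements of \(\mathbf{D}_N\) are subsets of \(W\) and that \(\Diamond_N\) is the (reflexive, symmetric) neighbourhood operator of \(\mathbb{F}_N\), so for any \(Y \subseteq W\) the set \(\Diamond_N^n Y\) is exactly the set of vertices at graph-distance at most \(n\) from \(Y\). With that reading, the statement to prove is: if \(X \in D_N\) is nonempty and \(\Diamond_N^4 X \neq W\), then \(X = D\) or \(\Diamond_N^4 X = {\sim}D\).

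First I would dispose of the case \(X = D\), which gives the first disjunct outright. So assume \(X \neq D\); since \(X\) is nonempty it then contains some vertex \(v \neq d\). The next step uses the observation recorded just before Lemma~\ref{simple}: every vertex of \(\mathbb{F}_N\) other than \(d\) is joined to \(a\) by a path of length at most \(2\). Hence \(a \in \Diamond_N^2\{v\} \subseteq \Diamond_N^2 X\), and therefore \(\Diamond_N^4 X = \Diamond_N^2(\Diamond_N^2 X) \supseteq \Diamond_N^2\{a\}\) by monotonicity of \(\Diamond_N\). Now I would compute \(\Diamond_N^2\{a\}\) directly from Definition~\ref{N-graph} (equivalently, from Figure~\ref{fig1}): the neighbours of \(a\) are \(b_1, b_2, b_3\) together with all the \(\ell_i\), and their neighbours in turn include \(c_1\), \(c_2\) and every \(u_j\); the only vertex not reached this way is \(d\), which sits at distance \(3\) from \(a\). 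Thus \(\Diamond_N^2\{a\} = {\sim}D\), giving \(\Diamond_N^4 X \supseteq {\sim}D\). Since \(\Diamond_N^4 X \neq W\) by hypothesis, the only remaining possibility is \(\Diamond_N^4 X = {\sim}D\), as required.

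I do not anticipate a genuine obstacle: the only point needing care is the identity \(\Diamond_N^2\{a\} = {\sim}D\), i.e.\ that \(d\) really is the unique vertex at distance exceeding \(2\) from \(a\), uniformly in \(N\). This is clear because the edges governed by \(N\) only rewire connections among the \(\ell_i\) and \(u_j\), all of which remain within distance \(2\) of \(a\) in every case, while the part of the graph built from \(a\), the \(b_i\), the \(c_i\) and \(d\) is fixed and contributes only \(d\) at distance \(3\). If one prefers to stay strictly inside \(\mathbf{D}_N\), one can instead invoke \(A \in D_N\) from Lemma~\ref{singletons} and run the same chain of inclusions with the named sets, but this changes nothing essential.
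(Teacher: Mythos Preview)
Your argument is correct and in fact cleaner than the paper's. The paper proceeds by a small case analysis: it first argues that \(\Diamond_N^4 X \neq W\) forces \(X \subseteq U \cup C_2 \cup D\) (because any element adjacent to \(a\), or the vertex \(c_1\), already pushes \(\Diamond^4\) to all of \(W\)), and then splits into the subcases \(X = D\), \(X \subseteq C_2 \cup U\), and \(d \in X\) with \(X \cap (C_2 \cup U) \neq \varnothing\), computing \(\Diamond^4\) in each. You instead pivot once through the vertex \(a\): any \(v \neq d\) lies within two steps of \(a\), and \(a\) lies within two steps of every vertex except \(d\), so the single inclusion \(\Diamond_N^4 X \supseteq \Diamond_N^2\{a\} = {\sim}D\) does all the work. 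This avoids the case split entirely and makes transparent why the constant \(4\) appears; it also shows that membership of \(X\) in \(D_N\) is not actually used, only \(X \subseteq W\).
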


\begin{proof}
Based on Fig. \ref{fig1}, if \(a \in \Diamond X\) or \(c_2 \in X\), then we must have \(\Diamond^4 X = W\), hence \(X \subseteq U \cup C_2 \cup D\). By Fig. \ref{fig1}, \(\Diamond^4 D = W \setminus (C_2 \cup U) \neq W\). Similarly, \(\Diamond^4 X = W\) if \(d \in X\) and \(X \cap (C_2 \cup U) \neq \varnothing\), and \(\Diamond^4 X= {\sim}D\) if \(X \subseteq C_2 \cup U\). Since \(\Diamond^4 X \neq W\), we must have \(X = D\) or \(\Diamond^4 X = {\sim} D\), as claimed. 
\end{proof}

\begin{lemma}\label{pres}
Let \(M, N \subseteq \mathbb{E}\) and let \(u \colon \mathbf{D}_M \to \mathbf{D}_N\) be an embedding. Then \(u(D) = D\).
\end{lemma}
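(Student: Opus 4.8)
The plan is to lean on Lemma~\ref{ddd}, which already characterises $D$ inside $\mathbf{D}_N$ up to one stray possibility, together with the elementary observation that $D = \{d\}$ is an atom of $\mathbf{D}_N$: its only subsets are $\varnothing$ and itself, so nothing can lie strictly between $\bar{\varnothing}$ and $D$.

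First I would record the two quantifier-free facts about $D$ that an embedding must transport. Since $u$ is a homomorphism, $u(\Diamond_M^4 D) = \Diamond_N^4 u(D)$, $u({\sim}z) = {\sim}u(z)$, $u(\varnothing) = \varnothing$ and $u(W) = W$; and since $u$ is injective it preserves $\neq$. Now in $\mathbf{D}_M$ we have $D \neq \varnothing$ and, by the computation in the proof of Lemma~\ref{singletons}, $\Diamond_M^4 D = {\sim}(C_2 \cup U) \neq W$. Hence $u(D) \in D_N \setminus \{\varnothing\}$ and $\Diamond_N^4 u(D) \neq W$, so Lemma~\ref{ddd}, applied inside $\mathbf{D}_N$ with $X = u(D)$, gives $u(D) = D$ --- which is exactly what we want --- or $\Diamond_N^4 u(D) = {\sim}D$.

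The only real step is ruling out the second alternative. Assuming $\Diamond_N^4 u(D) = {\sim}D$, and using that $u$ commutes with $\neg$ and $\Diamond$, we get $u({\sim}\Diamond_M^4 D) = {\sim}\Diamond_N^4 u(D) = D$. But ${\sim}\Diamond_M^4 D = C_2 \cup U$ (again the proof of Lemma~\ref{singletons}) properly contains the nonempty set $U_1$, and both $U_1$ and $C_2 \cup U$ lie in $D_M$ by Lemma~\ref{singletons}; thus $\varnothing \subsetneq U_1 \subsetneq {\sim}\Diamond_M^4 D$ in $\mathbf{D}_M$. Pushing this chain forward along the injective, order-preserving map $u$ yields $\varnothing \subsetneq u(U_1) \subsetneq D$ in $\mathbf{D}_N$, contradicting the fact that $D$ is an atom of $\mathbf{D}_N$. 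Hence $u(D) = D$.

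The main (and essentially only) obstacle is spotting the right witness. Lemma~\ref{ddd} leaves open that $u(D)$ could be one of the subsets of $C_2 \cup U$ satisfying $\Diamond^4 X = {\sim}D$, and embeddings do not in general preserve atomicity; the point is that an embedding must nonetheless turn a proper nonzero part of ${\sim}\Diamond_M^4 D$ into a proper nonzero part of $D$, which cannot exist. Everything else is routine preservation of the Boolean and modal operations and a glance at Fig.~\ref{fig1}.
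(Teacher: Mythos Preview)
Your argument is correct and follows essentially the same route as the paper: both apply Lemma~\ref{ddd} to reduce to the case $\Diamond_N^4 u(D) = {\sim}D$, compute $u(C_2\cup U) = D$, and then use $U_1$ as the witness that $C_2\cup U$ cannot map injectively into the singleton $D$. The only cosmetic difference is that the paper phrases the contradiction via the disjoint decomposition $C_2\cup U = U_1 \cup ((C_2\cup U)\setminus U_1)$ and argues that one piece must map to $\varnothing$, whereas you invoke atomicity of $D$ directly to rule out $\varnothing \subsetneq u(U_1) \subsetneq D$; the content is the same.
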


\begin{proof}
Suppose, for a contradiction, that \(u(D) \neq D\). Since \(u\) is an embedding, \(u(D) \neq\varnothing\). Based on Fig. \ref{fig1},  \(\Diamond_N^4 u(D) = u(\Diamond_M^4 D) = u(W \setminus (C_2 \cup U)) \neq W\), since \(u\) is an embedding. So, by Lemma \ref{ddd}, we must have \(\Diamond_N^4u(D) = {\sim} D\). By Lemma \ref{singletons}, we have \(U_1 \in D_M\) and \( (C_1 \cup U) \setminus U_1= (C_1 \cup U) \cap {\sim} U_1 \in D_M \). Now,
\[
u(U_1) \cup u((C_2 \cup U)\setminus U_1) = u(C_2 \cup U) = u ({\sim}\Diamond_M^4D) = {\sim}\Diamond_N^4u(D) = D,
\]
hence we must have \(u(U_1) = D = u((C_2 \cup U)\setminus U)\), \(u(U_1) = \varnothing = u(\varnothing)\) or \( u((C_2 \cup U) \setminus U_1) = \varnothing = u(\varnothing)\), which contradicts the fact that \(u\) is an embedding. Thus, \(u(D) = D\), as claimed.\end{proof} 

\begin{lemma}\label{diff}
Let \(M, N \subseteq \mathbb{E}\) with \(M \neq N\). Then \(\mathop{\textup{Var}} (\mathbf{D}_M) \neq \mathop{\textup{Var}} (\mathbf{D}_N)\).
\end{lemma}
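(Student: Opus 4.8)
The plan is to argue by contradiction. Suppose that \(\mathop{\textup{Var}}(\mathbf{D}_M) = \mathop{\textup{Var}}(\mathbf{D}_N)\) for some \(M \neq N\), let \(k\) be the least element of the symmetric difference \((M \setminus N) \cup (N \setminus M)\), and note that \(k\) is an even number with \(k \geq 2\); by symmetry we may assume \(k \in N \setminus M\). The first thing I would do is upgrade the hypothesis to an isomorphism. Since \(\mathbf{D}_N\) is simple by Lemma~\ref{simple}, it is subdirectly irreducible, so the computation in the proof of Lemma~\ref{conditions} gives \(\mathbf{D}_N \in \mathop{\textup{Si}}(\mathop{\textup{Var}}(\mathbf{D}_N)) = \mathop{\textup{Si}}(\mathop{\textup{Var}}(\mathbf{D}_M)) = \mathop{\textup{ISP}_\textup{U}}(\mathbf{D}_M)\). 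Because \(D \neq \varnothing\) and \(\Diamond_M D \neq W\) (indeed \(\mathbf{D}_M\) is infinite), the hypothesis of Lemma~\ref{big} is met, so it yields an embedding \(u \colon \mathbf{D}_N \to \mathbf{D}_M\), and Lemma~\ref{pres} tells us \(u(D) = D\). Since \(D\) generates \(\mathbf{D}_N\), the image of \(\mathbf{D}_N\) under \(u\) is the subalgebra of \(\mathbf{D}_M\) generated by \(u(D) = D\), which is all of \(\mathbf{D}_M\); hence \(u\) is an isomorphism fixing \(D\).

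The core of the argument is then to show that no such \(u\) can exist, by exhibiting an element of \(\mathbf{D}_N\) that \(u\) could not possibly send into \(\mathbf{D}_M\). The guiding observation is that for every unary term \(t\) we have \(u\bigl(t^{\mathbf{D}_N}(D)\bigr) = t^{\mathbf{D}_M}(u(D)) = t^{\mathbf{D}_M}(D)\); so whenever \(t\) happens to evaluate at \(D\) to the \emph{same} subset of \(W\) in \(\mathsf{Cm}(\mathbb{F}_N)\) and in \(\mathsf{Cm}(\mathbb{F}_M)\), the map \(u\) fixes that subset. By the minimality of \(k\), the graphs \(\mathbb{F}_N\) and \(\mathbb{F}_M\) agree on every edge not created by the twist at position \(k\); that is, they differ only in that \(\ell_k \mathrel{R_N} u_{k+1}\) holds in \(\mathbb{F}_N\) while \(\ell_{k+1} \mathrel{R_M} u_k\) holds in \(\mathbb{F}_M\). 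I would then reproduce the term computations from the proofs of Lemmas~\ref{embed} and~\ref{singletons}---replacing, for the term naming \(\{\ell_k\}\), the one implicit there by \(\Diamond\mu_{k-1} \cap {\sim}(\mu_{k-1} \cup \lambda_{k-1} \cup \lambda_{k-2})\), where \(\lambda_j, \mu_j\) are the terms already built for \(\{\ell_j\}, \{u_j\}\)---and check that there are unary terms \(\alpha\), \(\lambda_0, \dots, \lambda_k\) and \(\mu_1, \dots, \mu_{k-1}\) evaluating at \(D\) to \(\{a\}\), \(\{\ell_0\}, \dots, \{\ell_k\}\) and \(\{u_1\}, \dots, \{u_{k-1}\}\) \emph{in both} \(\mathbf{D}_N\) and \(\mathbf{D}_M\). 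Consequently \(u\) fixes each of these singletons.

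Finally I would set \(\rho \deq \Diamond\lambda_k \cap {\sim}(\alpha \cup \lambda_k \cup \mu_{k-1})\). In \(\mathbb{F}_N\) the neighbours of \(\ell_k\) are exactly \(a, u_{k-1}, u_k, u_{k+1}\) (using \(k \in N\)), so \(\rho^{\mathbf{D}_N}(D) = \{u_k, u_{k+1}\}\); in \(\mathbb{F}_M\) the neighbours of \(\ell_k\) are exactly \(a, u_{k-1}, u_k\) (using \(k \notin M\)), so \(\rho^{\mathbf{D}_M}(D) = \{u_k\}\). By Lemma~\ref{singletons} the sets \(\{u_k\}\) and \(\{u_{k+1}\}\) both lie in \(D_N\), hence are atoms of \(\mathbf{D}_N\), so \(\{u_k, u_{k+1}\} = \{u_k\} \vee \{u_{k+1}\}\) is a join of two distinct atoms. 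Applying \(u\) and using \(u(\rho^{\mathbf{D}_N}(D)) = \rho^{\mathbf{D}_M}(D)\), I would conclude that the atom \(\{u_k\}\) of \(\mathbf{D}_M\) equals \(u(\{u_k\}) \vee u(\{u_{k+1}\})\), a join of two distinct nonzero elements---impossible. This contradiction gives \(M = N\), completing the proof.

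I expect the main obstacle to be the verification in the middle paragraph: although the twist at position \(k\) is genuinely detected by \(\rho\), I must be sure it does \emph{not} affect any of the earlier terms \(\alpha, \lambda_0, \dots, \lambda_k, \mu_1, \dots, \mu_{k-1}\). Some of the intermediate terms from the proof of Lemma~\ref{singletons} (in the case where \(k-2 \in N\)) do inspect the neighbourhood of vertices at position \(k\), and one has to check that in each such term the contribution of the edge \(\ell_k \mathrel{R_N} u_{k+1}\) is removed by a subsequent intersection, so that the resulting singletons are the same in \(\mathbf{D}_N\) and \(\mathbf{D}_M\). This is a finite but delicate inspection of \(\mathbb{F}_N\) near position \(k\), most safely organised as an induction that also keeps track of the already-known agreement of \(N\) and \(M\) below \(k\).
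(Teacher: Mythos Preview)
Your proof is essentially the paper's: obtain an embedding \(\mathbf{D}_N\to\mathbf{D}_M\) fixing \(D\) via Lemma~\ref{pres}, note that by minimality of \(k\) the terms from Lemma~\ref{singletons} for \(A\), \(L_k\) and \(U_{k-1}\) evaluate identically in both algebras, and then derive the contradiction from your term \(\rho\), which is exactly the paper's term \(t\). One small slip in the setup: from \(\mathbf{D}_N\in\mathop{\textup{ISP}_\textup{U}}(\mathbf{D}_M)\) and Lemma~\ref{big} (applied with subscript \(M\)) you obtain \(\mathbf{D}_M\hookrightarrow\mathbf{D}_N\), not the direction you want; to get \(u\colon\mathbf{D}_N\to\mathbf{D}_M\) start instead from the symmetric inclusion \(\mathbf{D}_M\in\mathop{\textup{ISP}_\textup{U}}(\mathbf{D}_N)\), after which your isomorphism upgrade, though correct, is not actually used in the contradiction.
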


\begin{proof}
Suppose, for a contradiction, that we have \(\mathop{\textup{Var}} (\mathbf{D}_M) = \mathop{\textup{Var}} (\mathbf{D}_N)\). By Lemmas~\ref{simple} and~\ref{big}, there are embeddings \(u \colon \mathbf{D}_M  \to \mathbf{D}_N\) and \(v \colon \mathbf{D}_N \to \mathbf{D}_M\). As \(M \neq N\), we have \((M \setminus N) \cup (N \setminus M) \neq \varnothing\). Let \(i := \min((M \setminus N) \cup (N \setminus M))\).  Without loss of generality, we can assume that \(i\in M\), since we can interchange \(M\) with \(N\) (if necessary). From the proof of Lemma~\ref{singletons}, there are unary terms \(t_A\), \(t_{L_i}\) and \(t_{U_{i-1}}\) with \(t_{A}^{\mathbf{D}_M}(D) = A = t_{A}^{\mathbf{D}_N}(D)\), \(t_{L_i}^{\mathbf{D}_M}(D) = L_i = t_{L_i}^{\mathbf{D}_N}(D)\) and \(t_{U_{i-1}}^{\mathbf{D}_M}(D) = U_{i-1} = t_{U_{i-1}}^{\mathbf{D}_N}(D)\), since \(i\) is the minimum of \((M \setminus N) \cup (N \setminus M)\). Now, let \(t(x)\) be the unary term defined by
\[
t(x) \deq \Diamond t_{L_i}(x) \wedge
\neg(t_A (x) \vee t_{L_i}(x) \vee t_{U_{i-1}}(x)).
\]
Based on Fig.~\ref{fig1} and Fig.~\ref{fig2}, we have \(t^{\mathbf{D}_M}(D) = U_i\) and \(t^{\mathbf{D}_N}(D) = U_i \cup
U_{i+1}\). Using Lemma \ref{ddd},  we find that
\[
v(U_i) \cup v(U_{i+1}) = v( t^{\mathbf{D}_N}(D)) = t^{\mathbf{D}_M}(v(D)) = t^{\mathbf{D}_M}(D) = U_i,
\]
so we have \(v(U_i) = U_i = v(U_{i+1})\), \(v(U_i) = \varnothing = v(\varnothing)\) or \(v(U_{i+1}) = \varnothing = v(\varnothing)\). This contradicts the injectivity of \(v\), so \( \mathop{\textup{Var}} (\mathbf{D}_M) \neq \mathop{\textup{Var}} (\mathbf{D}_N)\), as claimed.
\end{proof}

\section{Conclusion}

We have constructed a continuum of subvarieties of\/ \(\vKTB\) of height
3. Our main result follows immediately.

\begin{theorem}
The class of normal axiomatic extensions of \(\mathbf{KTB}\) of codimension \(3\)
is of size continuum.
\end{theorem}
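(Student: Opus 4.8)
The plan is to combine the lemmas already proved with a cardinality count and the dual isomorphism recalled in Section~2. For the lower bound, note that for each \(N \subseteq \mathbb{E}\) the variety \(\mathop{\textup{Var}}(\mathbf{D}_N)\) is of height \(3\) (this was just established, using Corollary~\ref{DN-inf} and Lemmas~\ref{embed}, \ref{simple}, \ref{small}, \ref{big} via Lemma~\ref{conditions}), and by Lemma~\ref{diff} the assignment \(N \mapsto \mathop{\textup{Var}}(\mathbf{D}_N)\) is injective on \(\mathcal{P}(\mathbb{E})\). Since \(\mathbb{E}\), the set of positive even numbers, is countably infinite, \(\mathcal{P}(\mathbb{E})\) has cardinality \(2^{\aleph_0}\); hence \(\mathrm{Subv}(\vKTB)\) contains at least continuum many pairwise distinct subvarieties of height \(3\).

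Next I would transport this to logics. By the dual isomorphism between \(\mathrm{NExt}(\KTB)\) and \(\mathrm{Subv}(\vKTB)\) recalled just before Theorem~\ref{known}, a normal axiomatic extension of \(\mathbf{KTB}\) has codimension \(n\) in \(\mathrm{NExt}(\KTB)\) if and only if the corresponding subvariety has height \(n\) in \(\mathrm{Subv}(\vKTB)\). Applying this with \(n = 3\) turns the continuum of height-\(3\) subvarieties \(\mathop{\textup{Var}}(\mathbf{D}_N)\) into a continuum of pairwise distinct normal axiomatic extensions of \(\mathbf{KTB}\) of codimension \(3\), giving the inequality \(\geqslant 2^{\aleph_0}\).

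For the matching upper bound, the modal language over a countable stock of propositional variables has only countably many formulas, so there are at most \(2^{\aleph_0}\) sets of formulas, hence at most \(2^{\aleph_0}\) normal axiomatic extensions of \(\mathbf{KTB}\) in total, a fortiori at most \(2^{\aleph_0}\) of codimension \(3\). Since both inequalities hold, the class of normal axiomatic extensions of \(\mathbf{KTB}\) of codimension \(3\) has size exactly \(2^{\aleph_0}\), i.e.\ continuum.

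I do not expect any genuine obstacle in this final step: essentially all the difficulty is already absorbed into Definitions~\ref{N-graph}--\ref{N-alg} and Lemmas~\ref{singletons}--\ref{diff}. The only things needing a moment's attention are (a) making sure the notion of codimension in the statement is precisely the one matched by height under the dual isomorphism — which is exactly why the statement says \emph{axiomatic} extensions, in light of Remark~\ref{quasiv} — and (b) the routine cardinality bookkeeping for the upper bound.
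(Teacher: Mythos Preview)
Your proposal is correct and follows essentially the same route as the paper: the paper simply states that the main result ``follows immediately'' from the constructed continuum of height-\(3\) subvarieties together with the dual isomorphism between \(\mathrm{NExt}(\KTB)\) and \(\mathrm{Subv}(\vKTB)\), and you have merely spelled this out, adding the (standard, tacit in the paper) upper bound coming from the countable language.
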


It will be of interest to see what our result implies about 
subquasivarieties of \(\vKTB\) of small height, or, equivalently, 
about logics in \(\mathrm{CNExt}(\KTB)\) of small codimension
(see Remark~\ref{quasiv}). However, from Blanco, Campercholi and
Vaggione~\cite{BCV01} it follows that even the lattice of subquasivarieties of
\(\mathop{\textup{Var}}(\mathsf{Cm}(\mathbb{K}_2))\) is not a chain, so
the lattice of subquasivarieties of \(\mathop{\textup{Var}} (\mathbf{D}_N)\)
may be already quite complex, in particular, it may be of height strictly
greater than 3.

\section*{Acknowledgment}

This is a pre-print of a paper contributed to Advances in Modal Logic 2018. The final authenticated version is available online at:  \url{http://www.aiml.net/volumes/volume12/Koussas-Kowalski-Miyazaki-Stevens.pdf}.

%% Appendix.
%% Remove the \Appendix command if an 
%% appendix is not required.

%\Appendix
%Here starts the appendix. If you don't wish an appendix, please remove the \verb|\Appendix| command from the \LaTeX\ file.

%% Bibliography
%% Make sure to use the bibliographystyle aiml18.
\bibliographystyle{aiml18}

\end{document}